\documentclass[11pt,a4paper]{amsart}
\usepackage{geometry, empheq}
\usepackage[english]{babel}
\usepackage{amsmath}
\usepackage{amssymb}
\usepackage{amsthm}
\usepackage{varioref}
\usepackage{nicefrac}
\usepackage{subcaption}
\usepackage{float}
\usepackage[normalem]{ulem}
\usepackage{multirow}
\usepackage{multicol}
\usepackage[scaled]{beramono}
\usepackage{upquote}
\usepackage[margin=1cm,font=scriptsize,labelfont=bf]{caption}
\usepackage{color,xcolor}
\usepackage{listings}
\usepackage{algpascal}
\usepackage{enumitem}
\usepackage{mathtools}
\usepackage{booktabs}
\usepackage{adjustbox}
\usepackage{lipsum}
\usepackage{appendix}
\usepackage[short]{optidef}
\usepackage[mathscr]{euscript}
\usepackage[color=orange!25,textsize=footnotesize]{todonotes}
\usepackage{hyperref,cleveref}
 \hypersetup{
     colorlinks=true,
     linkcolor=teal,
     filecolor=teal,
     citecolor=teal,
     urlcolor=cyan,
     }
\selectlanguage{english}
\pagestyle{myheadings}
\geometry{left=3.cm, right=3.cm, top=3.5cm, bottom=3.5cm}
\setenumerate[1]{label=\arabic*.}

\newtheorem{definition}{{Definition}}[section]
\newtheorem{lemma}{{Lemma}}[section]
\newtheorem{theorem}{{Theorem}}[section]
\newtheorem{proposition}{{Proposition}}[section]
\newtheorem{corollary}{{Corollary}}[section]

\theoremstyle{remark}
\newtheorem{remark}{{Remark}}[section]
\newtheorem{assumption}{{Assumption}}[section]

\allowdisplaybreaks

\DeclareMathOperator{\Div}{div}

\title[$4D$-VAR in low regularity spaces]{Analysis of Four-Dimensional Variational Data Assimilation Problems in Low Regularity Spaces}
\author{P. Castro$^{\dag,\S}$, J.C. De los Reyes$^{\dag,\S}$}
\author{I. Neitzel$^\ddag$}
\thanks{{*Partially supported by the Deutsche Forschungsgemeinschaft (DFG, German Research Foundation) - Project-ID 211504053 - SFB 1060 and Hausdorff Center for Mathematics (HCM) under Germany's Excellence Strategy-EXC-2047/1-390685813.\\ \indent*Paula Castro acknowledges partial support from the PhD Program in Applied Mathematics at Escuela Polit\'ecnica Nacional del Ecuador.}}
\address{$\dag$ MODEMAT Research Center in Mathematical Modeling and Optimization, Quito, Ecuador.}
\address{$\S$ Departamento de Matem\'atica, Escuela Polit\'ecnica Nacional, Quito, Ecuador.}
\address{$\ddag$ Institut f\"ur Numerische Simulation, Rheinische Friedrich-Wilhelms-Universit\"at Bonn, Bonn, Germany.}

\begin{document}

\begin{abstract}
We carry out a rigorous analysis of four-dimensional variational data assimilation ($4D$-VAR) problems for linear and semilinear parabolic partial differential equations. Continuity of the state with respect to the spatial variable is required since pointwise observations of the state variable appear in the cost functional. Using maximal parabolic regularity tools, we prove this regularity for initial conditions with $L^\beta$-regularity guaranteed by control constraints, rather than Sobolev regularity of the controls ensured by artificial cost terms. We obtain existence of optimal controls and first order necessary optimality conditions for both the convex and nonconvex problem with spatial dimension $d=2,3$, as well as second order sufficient optimality conditions for the nonconvex problem for $d=2$.
\end{abstract}
\maketitle

\section{Introduction}
\setcounter{section}{1}
\setcounter{equation}{1}
Data assimilation (DA) can be described as the process through which available measurements or observations are incorporated into a given model to accurately estimate the system state at a given time. This is crucial in fields such as atmospheric and environmental sciences, geosciences, biology, and medicine, see~\cite{asch2016data} and the references therein. The two main approaches for DA problems are based on Kalman filters, on the one hand, and variational models, on the other hand. However, both methodologies are usually combined to exploit their advantages and avoid shortcomings, see, e.g.,~\cite{asch2016data,Kal,Tom}.

Four-dimensional variational data assimilation ($4D$-VAR) was originally proposed by Le Dimet and Talagrand~\cite{le1986variational}, aiming to assimilate observations acquired over an entire time interval $[t_0,t_n]$ instead of a single instant of time, as was up to that point the case with 3D-VAR. Mathematically, the finite-dimensional $4D$-VAR problem may be formulated as:
  \begin{equation}\label{eq:4d-var} \tag{P}
     \begin{array}{rl}
      &\displaystyle \min_u ~\frac{1}{2}\displaystyle\sum_{i=1}^n \left[H(y(t_i))-z_o(t_i)\right]^TR_i^{-1}\left[H(y(t_i))-z_o(t_i)\right] +\frac{1}{2}(u-u_b)^TB^{-1}(u-u_b)\\
      &\text{subject to:}\\
      &\hspace{2cm} y(t_i)=M_i(y(t_0))\hspace{1cm}\text{(System model),}\\
      &\hspace{2cm}y(t_0)=u\hspace{2.1cm}\text{ (Initial condition)},
     \end{array}
  \end{equation}
where $n$ is the number of time instants $t_i$ at which observations are taken, $z_o(t_i)$ represents the observed state at time $t_i$, and $u_b$ is the background initial condition. For each $i=1,\ldots,n$, $R_i$ represents the observation error covariance matrix at time $t_i$, $B$ is the background error covariance matrix, and $H$ is the observation operator that maps the model state to observable variables. The system model operators $M_i$ usually correspond to integration formulas of underlying differential equations.
In this paper, we are particularly interested in the following DA problem
$$	\min_{u\in U_{\text{ad}}} J(y,u)=\dfrac{1}{2}\displaystyle\int_0^T\sum_{k} [y(x_k,t)-z_o(x_k,t)]^2~ dt+ \frac{1}{2} \|u-u_b\|_{B^{-1}}^2,
$$
subject to either the linear partial differential equation (PDE)
\begin{equation}\label{eq:introlin}
	\partial_t y+ Ay = \ell,\quad 
		y(0)=u,
\end{equation}
or the semilinear PDE
\begin{equation}\label{eq:semilinintro}
		\partial_t y+ Ay+\mathbf{g}(y)= \ell,\quad 
		y(0)=u.
\end{equation}
The precise functional analytic setting, including boundary conditions, will be stated in Section~\ref{prelim}. Here, let us give an overview about the main challenges.

Most of the analysis of $4D$-VAR problems has been carried out in finite dimensions due to its widespread practical use; analytical results in function spaces are relatively scarce. Pointwise observations in space are important in practice since the location of many observation stations and measurement devices is fixed, while the acquisition may occur at different time intervals.  Indeed, a main analytical challenge is to obtain well-posedness of pointwise-in-space observations of the state for an $L^2$-energy for the background error in the cost functional, to mimic the structure of problem~\eqref{eq:4d-var}. In this regard, let us start by mentioning the seminal paper~\cite{le1986variational} and the more recent contributions~\cite{le2017variational,JC_Pau,korn2021strong}, where the $4D$-VAR problem is posed and treated in Hilbert or Banach spaces using a general Bayesian setting. Although mathematically rigorous, the Bayesian framework considered there departs from the finite-dimensional problem~\eqref{eq:4d-var}, as the norms in the cost functional are significantly different. 
Pointwise-in-space observations of the state together with an $L^2$-energy for the background error cannot be considered within this framework.

An alternative path consists of exploiting the regularity properties of the underlying PDE. Parabolic PDEs with rather low regularity in the initial condition, even measures, have been explored in an optimal control setting, see e.g.~\cite{casas2015sparse} for initial data identification problems in measures spaces. 
However, these results do not guarantee continuity of the state with respect to the spatial variable. On the other hand, a lot of results in the literature pose rather strong regularity conditions on the initial state. We mention a rather classical result by~\cite{RayZid1999} on control of a semilinear parabolic PDE that includes control in the initial condition with $L^\infty$-regularity.  Very often, these requirements guarantee continuity of the state in the whole space-time cylinder (or in an interior subset). This is for instance very often the case for control problems with pointwise state constraints, that are closely related to problems with pointwise observations. If, unlike in~\cite{ChristofVexler2021}, typical Slater-type arguments are used to show first order optimality conditions, continuity of the states is usually required to obtain state spaces with non-zero interior and in turn Lagrange multipliers in a measure space appearing in the adjoint equation, cf. the classical paper~\cite{casas1986}, whose regularity can sometimes be improved, see~\cite{CasasMateosVexler2014}.
We will in fact observe similar irregular terms in the adjoint equations, due to the pointwise in space observations. In order to guarantee these regularity requirements, state constrained problems analyzed in the literature very often consider continuous initial states. We mention the particular setting in \cite{RayZid1998} for control problems with semilinear parabolic PDE and pointwise in time and space constraints.  A problem closer to our setting is included in the discussion in the more recent work~\cite{hoppeneitzel2022} on state-constrained parabolic problems with quasilinear PDE (yet no control in the initial condition). A specific setting therein imposes pointwise-in-space and averaged in time state constraints. However, the regularity requirements of the quasilinear operator still require (H\"older) continuity of the state in the whole space-time-domain to avoid blow up of solutions. This again leads to initial conditions with comparably high regularity. This is also true for regularity results for quasilinear parabolic PDEs from~\cite{meinlschmidt2016Rehberg} or~\cite{hoppe2023global}, and distinguishes our results from many further recent results on control of even quasilinear parabolic PDEs, including~\cite{meinlschmidt2017optimal1, meinlschmidt2017optimal2,casaschrysafinosquasilinear,bonifacius2018second,bonifaciushoppemeinlschmidtneitzel2025}. We note in passing that maximal parabolic regularity tools have also been applied to sparse control problems with measures in the state equations (see, e.g., \cite{casas2013parabolic, leykekhman2020numerical}).

In this paper, we aim to further bridge the theoretical gap for $4D$-VAR problems subject to linear and semilinear parabolic PDE-constraints by considering both pointwise observations in space and an $L^2$-energy for the background error. First, we study the required regularity and well-posedness of the four-dimensional variational DA problems solely using the natural $L^2$-norm in the cost functional. To do so, we consider the initial condition in the space $L^{\beta}(\Omega)$, that we enforce by additional integral control constraints. We obtain existence of optimal solutions and first order necessary optimality conditions for $\beta>2$ for spatial dimension $d=2$ and $d=3$, as well as second order sufficient optimality conditions for the nonconvex problem for $d=2$.
We rely on \textit{maximal parabolic regularity} as a tool to prove continuous-in-space states, see~\cite[Chapter III]{amann1995linear} for a first overview. We note that a similar regularity result has also recently been shown for a linear diffusion equation in~\cite{dondl2023} by different means. This will be combined with our estimates to obtain second order sufficient optimality conditions (SSC) in the two-dimensional setting.
The restriction to $d=2$ for the SSC result has to be seen in the context of well-known challenges of SSC for time-dependent parabolic problems with certain continuity requirements of the solution. For right-hand-side control and problems with pointwise state constraints, for instance,  the first classical results even required $d=1$, see~\cite{raymond2000second}. Improvements have later been obtained by~\cite{casas2008sufficient} for purely time-dependent controls, or~\cite{krumbiegel2013second} for space-and-time dependent controls in $d=2$, and just recently for $d=3$ in~\cite{Casas12022024}. The subject of SSC has since been a very active topic of research, we refer to \cite{casas2012second} for a fundamental result that includes a rather abstract setting and an overview about typical challenges such as the two-norm discrepancy. One of the more recent contributions to SSC is the work~\cite{hoppeneitzel2022} already mentioned, for quasilinear parabolic settings. For pointwise in space and averaged in time constraints this work could deal with rather rough geometrical settings, but the  assumptions on the initial condition are too strong for our purposes.

The paper is organized as follows: Section 2 introduces definitions and known results of interpolation spaces and maximal parabolic regularity, as well as the functional analytic framework for the problems under consideration. Section 3 is devoted to the convex variational DA problem. First, we show sufficient regularity of the linear PDE for suitable $L^\beta$-regularity of the initial condition, using autonomous maximal parabolic regularity and interpolation theory. Then, we discuss optimality conditions by rather straightforward arguments. In Section 4, we study the $4D$-VAR problem with semilinear PDE constraint. We prove well-posedness and present a detailed analysis of differentiability and Lipschitz properties of the solution operator. First order necessary conditions are shown for $d=2,3$, followed by a result on second order sufficient optimality conditions in the case $d=2$. 
\section{Standing assumptions and preliminaries}\label{prelim}
\setcounter{equation}{0}
In this section, we collect and present known preliminary results and definitions required for the analysis of the state and adjoint equations in the optimization problems and in their optimality systems. 
We begin with an abstract discussion in Section~\ref{sec:prelim} and discuss the specific function space setting for the model problems in Section~\ref{sec:functional_fram}.  Let us agree on:
\begin{assumption}\label{assum:001}
	\hfill
	\begin{enumerate}
		\item $T>0$ is a real number that represents the final time of the interval $I:=(0,T)$.
		\item $X$ and $Y$ are real reflexive Banach spaces such that  $Y\overset{d}{\hookrightarrow} X$.
	\end{enumerate}
\end{assumption}
We adopt the notation from~\cite[section I.2]{amann1995linear} and write $[X,Y]_{\theta}$ for complex and $(X,Y)_{\theta,r}$ for real interpolation spaces, where $0<\theta<1$ and $1\leq r\leq\infty$.  Moreover, we employ usual notation for Lebesque, Sobolev, and H\"older spaces. Since $X$ and $Y$ are reflexive spaces, we have that  $\left(L^r(I;X)\right)^*=L^{r'}(I;X^*)$ and $\left(L^r(I;Y)\right)^*=L^{r'}(I;Y^*)$ for $r\in ]1,\infty[$ and $r'$ denoting its conjugate exponent, i.e., $1=\frac{1}{r}+\frac{1}{r'}$, cf.~\cite[Remark 2.4]{meyer2017optimal}. Let us also point out here that throughout, $c>0$ will denote a generic constant. 
\subsection{Maximal parabolic regularity and solution concepts for the state equation}\label{sec:prelim}
A key concept in our analysis is maximal parabolic regularity of the differential operator, where we distinguish between autonomous and nonautonomous operators. If $A$ does not depend on time, we recall the definition from~\cite{amann1995linear}. 
\begin{definition}[Autonomous Maximal Parabolic Regularity]
	We say that the operator 
	$A\in\mathcal{L}(Y,X),$ satisfies maximal parabolic $L^r(I;X)$-regularity, with $r\in]1,\infty[$, if for every $\ell\in L^r(I;X)$ and $y_0\in(X,Y)_{\frac{1}{r'},r}$, the equation
	\begin{equation}\label{eq:est}
		\partial_t y+Ay=\ell,\quad y(0)=y_0
	\end{equation}
	admits a unique solution $y\in W^{1,r}(I;X)\cap L^r(I;Y)$. Moreover, this solution satisfies
	\begin{equation}\label{eq:estimate}
		\|y\|_{\mathbb{W}^r(Y,X)}\leq c\left(\|\ell\|_{L^r(I;X)}+\|y_0\|_{(X,Y)_{\frac{1}{r'},r}}\right),
	\end{equation}
	 for some constant $c>0$ independent of $\ell$ and $y_0$.
\end{definition}
Here and in the following, we write \begin{equation}
	\label{defWr}\mathbb{W}^r(Y,X):=W^{1,r}(I;X)\cap L^r(I;Y),
\end{equation} and, if the context is clear, we just speak of maximal parabolic regularity. The space $\mathbb{W}^r(Y,X)$ is a reflexive  Banach space since $X$ and $Y$ are reflexive,~\cite[Theorem I.5.13]{gajewski1974gr}.

If the operator is non-autonomous, i.e., it depends on time as in ~\eqref{eq:semilinintro}, we consider the following notion of maximal parabolic regularity from~\cite{disser2017maximal}.
\begin{definition}[Non-autonomous Maximal Parabolic Regularity]\label{def:non-aut}
	Let $I\ni t\mapsto\mathcal{A}(t)\in\mathcal{L}(Y,X)$ be a bounded and strongly measurable map such that $\mathcal{A}(t)$ is closed in $X$ for all $t\in I$. Then, the family $\{\mathcal{A}(t)\}_{t\in I}$ satisfies (non-autonomous) maximal parabolic $L^r(I;X)$-regularity with $r\in]1,+\infty[$, if $\ell\in L^r(I;X)$ and $y_0\in (X,Y)_{1-\frac1r,r}$ there exists a unique solution $y\in \mathbb{W}^r(Y,X)$ 
	satisfying
	\begin{equation}\label{eq:non-aut}
		\partial_t y(t) + \mathcal{A}(t) y(t)=\ell(t),\qquad y(0)=y_0, \qquad \text{for a.e. } t\in I.
	\end{equation}
\end{definition}
In Definition~\ref{def:non-aut}, $\text{Dom}(\mathcal{A}(t))=Y$ for all $t\in I$. With the help of these concepts, we define weak and strong solutions. \begin{definition}[Weak and strong solutions,~\cite{amann2005nonautonomous}]\label{def:w-s}
	\hfill
	\begin{enumerate}
		\item $y\in W^{1,r}(I;X)\cap L^r(I;Y)$ is called a \textbf{strong solution} of~\eqref{eq:non-aut} if $y$ satisfies~\eqref{eq:non-aut} in the pointwise sense almost everywhere, or equivalently in the distribution sense.
		\item $y\in L^r(I;Y)$ is a \textbf{weak solution} of~\eqref{eq:non-aut} if $y$ satisfies:
		\[
		\int_0^T\langle\left(-\partial_t + \mathcal{A}^*\right)\varphi,y\rangle_{Y^*,Y}~dt=\int_0^T\langle\varphi,\ell\rangle_{X^*,X}~dt + \langle\varphi(0),y_0\rangle_{(Y^*,X^*)_{\frac{1}{r},r'},(X,Y)_{\frac{1}{r'},r}}
		\]
		where $\mathcal{A}^*(t):=\mathcal{A}(t)^*\in\mathcal{L}(X^*,Y^*)$ for almost every $t\in I$, and $\varphi\in\mathcal{D}([0,T[;X^*)$, i.e., the $X^*$-valued smooth functions with compact support in $[0,T[$.
	\end{enumerate}
\end{definition}
 Note that for time-independent operators as in~\eqref{eq:202} or~\eqref{eq:est}, we can identify $A\in\mathcal{L}(Y,X)$ with the constant mapping $t\mapsto \mathcal{A}(t)=A$. Therefore, the definitions of autonomous and nonautonomous maximal parabolic regularity coincide; see, e.g.,~\cite[Section III.1.5]{amann1995linear}, allowing to apply Definition~\ref{def:w-s} also to~\eqref{eq:est}.
 
Moreover, let us introduce \textit{mild solutions}, which we will use as an auxiliary concept. 
\begin{definition}[Mild solution, {\cite[Chapter 6.1]{pazy}}]
	\label{def:mild}
Let $-A$ be the generator of the strongly continuous analytic semigroup $\{e^{-tA}:t\geq 0\}$ on $X$  and  $g\colon Z\to X$ for some Banach space $Z$. 
We call
$y\in C(\bar{I};Z)$ a \textbf{mild solution} of 
\begin{equation}\label{eq:semi_mild}
\partial_t y+Ay+g(y)=0,\quad y(0)=y_0, 
\end{equation}
if it satisfies the integral equation
\begin{equation}\label{eq:int_mild}
	y(t)=e^{-tA}y_0 - \int_0^t e^{-(t-\tau)A}g(y(\tau))d\tau.
\end{equation}
\end{definition}
\subsection{Useful embedding results}
We end this section by collecting the following auxiliary results from the literature for later use:
\begin{proposition}\label{prop:001}
	Let $X$ and $Y$ be Banach spaces with dense embedding $Y\overset{d}{\hookrightarrow} X$. For $0<\theta<1$, 
	\begin{equation*}
		Y\overset{d}{\hookrightarrow}(X,Y)_{\theta,1}\overset{d}{\hookrightarrow}(X,Y)_{\theta,q}\overset{d}{\hookrightarrow}(X,Y)_{\theta,p}    {\hookrightarrow}(X,Y)_{\theta,\infty}\overset{d}{\hookrightarrow}(X,Y)_{\vartheta,1}\overset{d}{\hookrightarrow} X
	\end{equation*}
is satisfied for $1<q<p<\infty$ and $0<\vartheta<\theta<1$. Moreover,
	\begin{equation*}
		(X,Y)_{\theta,1}\overset{d}{\hookrightarrow}[X,Y]_{\theta}\hookrightarrow (X,Y)_{\theta,\infty}.
	\end{equation*}
\end{proposition}
We refer to~\cite[Section I.2.5]{amann1995linear} and~\cite[Section 4.4.7]{bergh2012} for the proof of the above proposition.
\begin{proposition}\label{prop:002}
	Let $X$, $Y$ be Banach spaces such that $Y\overset{d}{\hookrightarrow} X$. Given $1\leq r<\infty$ and $r'$ its conjugate exponent, we have:
	\begin{itemize}
		\item[$(i)$] If $0<\theta -\frac{1}{r'}<\frac{1}{q}\leq1$, then
		$\mathbb{W}^r(Y,X)\hookrightarrow L^q\left(I;(X,Y)_{\theta,1}\right)$
		\item[$(ii)$] If $\theta=\frac{1}{r'}$, then
		$\mathbb{W}^r(Y,X)\hookrightarrow C(\bar I;(X,Y)_{\frac{1}{r'},r})$
		\item[$(iii)$] If $0\leq\gamma<\frac{1}{r'}-\theta$, then  $\mathbb{W}^r(Y,X)\hookrightarrow C^{\gamma}\left(I;(X,Y)_{\theta,1}\right).$
	\end{itemize}
	Moreover, if $\theta\neq\frac{1}{r'}$ and $Y$ is compactly embedded in $X$, the embeddings are compact.
\end{proposition}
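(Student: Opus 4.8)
The plan is to reduce the three embeddings, and the compactness assertion, to a single \emph{mixed-derivative} interpolation inequality together with one-dimensional (scalar-in-time, vector-valued) Sobolev embeddings, handling the endpoint $(ii)$ separately as a genuine trace theorem. The point of departure is that $\mathbb{W}^r(Y,X)$ is the intersection of $W^{1,r}(I;X)$ — full temporal smoothness, values in $X=(X,Y)_0$ — and $L^r(I;Y)=W^{0,r}(I;Y)$ — no temporal smoothness, values in $Y=(X,Y)_1$. Since the intersection of a compatible couple embeds continuously into every complex interpolant of that couple, I would first record, for each $\sigma\in(0,1)$,
\[
  \mathbb{W}^r(Y,X)\ \hookrightarrow\ \bigl[W^{1,r}(I;X),\,L^r(I;Y)\bigr]_{\sigma}\ =\ H^{1-\sigma,r}\bigl(I;[X,Y]_{\sigma}\bigr),
\]
using the standard identification of the complex interpolant of these anisotropic spaces with a Bessel-potential space in time whose target is the interpolated couple. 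This single estimate decouples temporal from spatial smoothness and is the engine for the interior cases.

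With it in hand, $(i)$ and $(iii)$ become purely scalar-in-time Sobolev embeddings applied to $[X,Y]_\sigma$-valued functions: $H^{s,r}(I;E)\hookrightarrow L^q(I;E)$ whenever $\tfrac1q\ge\tfrac1r-s$, and $H^{s,r}(I;E)\hookrightarrow C^{\gamma}(I;E)$ whenever $s-\tfrac1r>\gamma\ge0$, here with $s=1-\sigma$. The role of the \emph{strict} inequalities in the hypotheses is to buy room in the interpolation parameter: given the prescribed target $(X,Y)_{\theta,1}$, I would pick $\sigma>\theta$ close to $\theta$ — for $(iii)$ any $\sigma\in(\theta,\tfrac1{r'}-\gamma)$, for $(i)$ any $\sigma\in\bigl(\theta,\min\{\tfrac1{r'}+\tfrac1q,1\}\bigr)$ — so that the scalar threshold still holds, and then descend to the requested fine index through $[X,Y]_\sigma\hookrightarrow(X,Y)_{\sigma,\infty}\hookrightarrow(X,Y)_{\theta,1}$, which is exactly \eqref{eq:int2} followed by the chain \eqref{eq:int1} for $\theta<\sigma$. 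Since $s-\tfrac1r=\tfrac1{r'}-\sigma$ and $\tfrac1r-s=\sigma-\tfrac1{r'}$, letting $\sigma\downarrow\theta$ reproduces exactly the stated ranges $\gamma<\tfrac1{r'}-\theta$ and $\theta-\tfrac1{r'}<\tfrac1q$.

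The critical value $\theta=\tfrac1{r'}$ in $(ii)$ cannot be reached this way, because the borderline scalar embedding $H^{1/r,r}(I)\hookrightarrow C(I)$ fails; here I would argue directly through the temporal trace theorem. After extending $y$ from $I$ to $\mathbb{R}$ by a bounded extension on $\mathbb{W}^r$, I would invoke the Lions–Peetre mean/trace characterization of $(X,Y)_{1-1/r,r}$: the pointwise value $y(t_0)$ of any function in $W^{1,r}(\mathbb{R};X)\cap L^r(\mathbb{R};Y)$ lies in $(X,Y)_{1-1/r,r}$ with a bound uniform in $t_0$, so $y\in L^\infty(I;(X,Y)_{1/r',r})$; genuine continuity then follows by approximating $y$ in $\mathbb{W}^r$ by smooth $Y$-valued functions, whose traces are plainly continuous. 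This is $(ii)$, and it is also the fact underpinning the well-posedness requirement $y_0\in(X,Y)_{1-1/r,r}$ in Definition \ref{def:001}.

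For the compactness claim under $Y$ compactly embedded in $X$ and $\theta\neq\tfrac1{r'}$, I would use an Aubin–Lions–Simon argument: the uniform temporal smoothness carried by $\mathbb{W}^r(Y,X)$ yields equicontinuity in time, while the compact spatial embedding — which is inherited by the interior interpolation spaces, so that $(X,Y)_{\sigma,\cdot}$ embeds compactly into $(X,Y)_{\theta,1}$ for $\theta<\sigma$ — gives compactness in the spatial variable; excluding $\theta=\tfrac1{r'}$ is precisely what keeps us off the critical trace exponent, where equicontinuity may degenerate. I expect the two genuine obstacles to be, first, the endpoint $(ii)$, which is a true trace theorem rather than a corollary of Sobolev embedding, and second, the fine-index bookkeeping needed to land in the \emph{smallest} space $(X,Y)_{\theta,1}$; once these are settled, $(i)$ and $(iii)$ are soft consequences of one-dimensional Sobolev theory. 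This architecture is the one carried out in \cite[Theorem 3]{amann2001linear} and \cite[Theorem 4.10.2]{amann1995linear} via the trace method.
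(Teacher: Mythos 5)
The paper offers no proof of its own here: Proposition~\ref{prop:002} is quoted verbatim from \cite[Theorem 3]{amann2001linear} and \cite[Theorem 4.10.2]{amann1995linear}, where it is established for an \emph{arbitrary} densely injected Banach couple by the trace method of real interpolation. Measured against that, your treatment of $(ii)$ (Lions--Peetre trace characterization of $(X,Y)_{1-1/r,r}$, extension to $\mathbb{R}$, density of smooth $Y$-valued functions) and your Aubin--Lions--Simon sketch for the compactness assertion are sound and in fact close in spirit to the cited proof; the bookkeeping with $\sigma\downarrow\theta$ and the chains \eqref{eq:int1}--\eqref{eq:int2} is also correct.

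The genuine gap is the engine you use for $(i)$ and $(iii)$: the identification
\begin{equation*}
\bigl[W^{1,r}(I;X),\,L^r(I;Y)\bigr]_{\sigma}\;=\;H^{1-\sigma,r}\bigl(I;[X,Y]_{\sigma}\bigr)
\end{equation*}
is not a fact about general Banach couples. Already in the diagonal case $X=Y$, the equality $[W^{1,r}(I;X),L^r(I;X)]_{\sigma}=H^{1-\sigma,r}(I;X)$ requires vector-valued Littlewood--Paley/Mikhlin theory, i.e., boundedness of the relevant imaginary powers of $(1-\partial_t^2)^{1/2}$ (equivalently, of the Hilbert transform) on $L^r(\mathbb{R};X)$, which is precisely the UMD (class $\mathcal{HT}$) property; Amann proves such mixed interpolation identities only under this hypothesis (see \cite[Section III.4.5]{amann1995linear}), and even the one-sided embedding you actually need is obtained by the same multiplier machinery, so no elementary argument rescues it. The proposition, however, is stated for general couples $Y\overset{d}{\hookrightarrow}X$, and the paper's standing assumptions impose only reflexivity, which does not imply UMD. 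Consequently, your argument proves a strictly weaker statement. It does suffice for the concrete couple used everywhere in this paper, $X=W^{-1,\beta}(\Omega)$, $Y=W_0^{1,\beta}(\Omega)$ with $1<\beta<\infty$, since these are UMD; but to prove Proposition~\ref{prop:002} at its stated generality, the complex-interpolation step must be replaced by a real-interpolation argument, e.g., deducing $(i)$ and $(iii)$ from the elementary inequality $\|a\|_{(X,Y)_{\theta,1}}\leq C_\theta\|a\|_X^{1-\theta}\|a\|_Y^{\theta}$ combined with the trace-type estimates you already invoke for $(ii)$ --- which is exactly the route taken in the cited theorems.
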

The proof of these results can be found in~\cite[Theorem 3]{amann2001linear} and~\cite[Theorem 4.10.2]{amann1995linear}.
\subsection{Functional analytic framework for our model problem}\label{sec:functional_fram}
We now give a precise definition of the framework for our model problems~\eqref{eq:introlin} and~\eqref{eq:semilinintro}, only leaving details on the nonlinearity $g$ to Section~\ref{sec:nonlinear}.
\begin{assumption}\label{assum:001-b}
	\hfill
	\begin{enumerate}
		\item $\Omega\subset\mathbb{R}^d$, $d=2,3$, is a bounded domain with Lipschitz boundary $\Gamma:=\partial\Omega$.  
		\item Let $A$ be a linear second-order differential operator of the form:
		\begin{equation*}
			A\colon W_0^{1,\beta}(\Omega)\rightarrow W^{-1,\beta}(\Omega), \quad y\mapsto -\Div k \nabla y
		\end{equation*}
		such that
		\[
		\langle Ay,v\rangle =\displaystyle\int_{\Omega}k\nabla y\cdot \nabla v~dx,\quad y\in W_0^{1,\beta}(\Omega),\quad v\in W_0^{1,\beta'}(\Omega).
		\]
		\item The coefficient function $k\in L^{\infty}(\Omega;\mathbb{R}^{d\times d})$ is uniformly elliptic and symmetric.
	\end{enumerate}
\end{assumption}
We now fix $Y:=W_0^{1,\beta}(\Omega),$ for some $\beta$ according to Assumption~\ref{assum-beta} below, recalling that this space denotes the closure of $C_0^{\infty}(\Omega)$ in $W^{1,\beta}(\Omega)$. It thus encodes homogeneous Dirichlet boundary conditions. Mixed or pure Neumann boundary conditions could be prescribed, as long as the following crucial property of $A$ is satisfied, we refer to an extensive discussion in e.g.~\cite{krumbiegel2013second,hoppe2023global} for geometry constellations that guarantee it and also for the limits of this assumption.
\begin{assumption}\label{assum-beta}
	There exists $\beta\in \left]d,\frac{2d}{d-2}\right[$ such that  $A = -\Div k \nabla$ forms a topological isomorphism from $W_0^{1,\beta}(\Omega)$ to $W^{-1,\beta}(\Omega)$. Moreover, we assume $r\in\left]\frac{4\beta}{3\beta-d},2\right[.$
\end{assumption}
We emphasize that from here on, Assumptions~\ref{assum:001-b} and~\ref{assum-beta} will always hold. It will, in particular, guarantee maximal parabolic regularity of the operator $A$, as well as the embedding $W_0^{1,\beta}(\Omega)\hookrightarrow C(\bar{\Omega})$.
For further  reference, note that the conditions on $r$ imply
$2<r'<\frac{4\beta}{\beta+d}$ for the conjugate exponent $r'$. We will frequently use this in the following.
By $\frac{2d}{d+2}<\beta'<\frac{d}{d-1}$, we denote the conjugate exponent of $\beta$, and agree that $W^{-1,\beta}(\Omega)$ will denote the dual space of $W_0^{1,\beta'}(\Omega)$. If we set $X=W^{-1,\beta}(\Omega),$ Assumption~\ref{assum:001} is satisfied.
\begin{proposition}\label{assum:op_A}
Let Assumption~\ref{assum:001-b} hold and let $\beta$ and $r$ satisfy Assumption~\ref{assum-beta}. Then the operator $A\colon W_0^{1,\beta}(\Omega)\rightarrow W^{-1,\beta}(\Omega)$, $A=-\Div k\nabla$ satisfies maximal parabolic $L^r(I;W^{-1,\beta}(\Omega))$-regularity.
\end{proposition}
\begin{proof}
	We refer to~\cite[Lemma 6.4]{meyer2017optimal}, where this is shown for all $r>1$.
\end{proof}
We end this section by an auxiliary results for later reference. \begin{proposition}\label{prop:bound_A2}
	Let Assumptions~\ref{assum:001-b} and~\ref{assum-beta} hold. Then, the operator $-A$ generates a strongly continuous analytic semigroup $\{e^{-tA}\}_{t\geq0}$ on $W^{-1,\beta}(\Omega)$, which satisfies 
	\begin{equation}\label{eq:bound_A2}
		\|e^{-tA}z\|_{L^{\beta}(\Omega)}\leq c\|z\|_{W^{-1,\beta}(\Omega)}	\end{equation}
	 for all $z\in W^{-1,\beta}(\Omega)$ and some $c>0$.
\end{proposition}
\begin{proof}
Proposition~\ref{assum:op_A}  and e.g.,~\cite[Remark 3.1-(b)]{amann2005nonautonomous} or~\cite[Theorem 2.2]{dore2006p} guarantee that  $-A$ generates a strongly continuous analytic semigroup on $W^{-1,\beta}(\Omega)$. This semigroup is bounded for $t\in I$,~\cite[Theorem 1.2.2.2]{pazy}. Since the spectrum of $A$ does not contain $0$, see, e.g., ~\cite[Lemma 6.4]{meyer2017optimal}, we can apply Theorem 6.13 in~\cite[Section 2.6]{pazy} and obtain that the semigroup maps 
from $W^{-1,\beta}(\Omega)$ to  $\text{Dom}(A^{1/2})$. From~\cite[Corollary 11.5.3]{martinez2001theory}, Proposition~\ref{prop:001} and~\cite[Theorem 1.3]{bechtel2019}, it holds that 
	$\text{Dom}(A^{1/2})=(W^{-1,\beta}(\Omega),W_0^{1,\beta}(\Omega))_{\frac12,1}\hookrightarrow[W^{-1,\beta}(\Omega),W_0^{1,\beta}(\Omega)]_{\frac12}=L^\beta(\Omega).$ 
Consequently, there is a positive constant $c$ such that
\begin{align*}
	\|e^{-tA}z\|_{L^{\beta}(\Omega)}&\leq c\|e^{-tA}z\|_{\text{Dom}(A^{1/2})}\leq c\|e^{-tA}\|_{\mathcal{L}(W^{-1,\beta}(\Omega),\text{Dom}(A^{1/2}))}\|z\|_{W^{-1,\beta}(\Omega)}%
\end{align*}
and the assertion follows from the boundedness of the semigroup.
\end{proof}
\begin{corollary}
Under the assumptions of the last proposition, a weak solution of~\eqref{eq:semi_mild} with initial condition $u\in L^\beta(\Omega)$ is a strong solution and vice versa. Moreover, any (weak and strong) solution of~\eqref{eq:semi_mild} is a mild solution of~\eqref{eq:semi_mild}.
\end{corollary}

\begin{proof}
	Note that the $L^r(I; W^{-1,\beta}(\Omega))$ maximal parabolic regularity of the autonomous operator $A$ guarantees $L^{r'}(I; W^{-1,\beta'}(\Omega))$ maximal parabolic regularity of its adjoint, see~\cite[Lemma 36]{herzog2017existence}. Then the first assertion follows from~\cite[Proposition 6.1]{amann2005nonautonomous}.
The second one is a consequence of Proposition~\ref{prop:bound_A2}.
\end{proof}
\section{$4D$-VAR Problem: Linear case}
\setcounter{equation}{0}
We will now study the infinite-dimensional $4D$-VAR problem subject to a linear heat equation~\eqref{eq:introlin} within the precise mathematical setting outlined in the previous section.  To simplify our analysis, the observation error covariance operators, $R_i$, and the observation operator, $H$, in the general problem~\eqref{eq:4d-var} have been set equal to the identity.  
We recall the problem formulation for convenience:
\begin{equation}\label{eq:201}
	\min_{u\in U_{\text{ad}}} J(y,u)=\dfrac{1}{2}\displaystyle\int_0^T\sum_{k} [y(x_k,t)-z_o(x_k,t)]^2~ dt+ \frac{1}{2} \|u-u_b\|_{B^{-1}}^2,
\end{equation}
subject to the linear dynamical system
\begin{equation}\label{eq:202}
	\begin{array}{rll}
		\partial_t y+ Ay = &\ell,
		\quad y(0)=u. 
	\end{array}
\end{equation}
Recall that homogeneous Dirichlet boundary conditions are contained in the definition of the operator $A$.
We now lay out all remaining assumptions on the problem data.
\begin{assumption}
	\label{optcontrol}
	For the data in~\eqref{eq:201} we agree on the following:
	\begin{itemize}
		\item For some given $b>0$, we define the set of admissible controls as $$\displaystyle U_{\text{ad}}=\left\{u\in L^{\beta}(\Omega): \int_{\Omega}|u(x)|^{\beta}~dx\leq b\right\}.$$ 
		\item The integer $M>0$ and pairwise disjoint spatial observation points $x_1,\ldots x_M\in\Omega$ are fixed. Moreover, a function  $z_o\in  L^{r'}(I; C(\bar{\Omega}))$ is given such that $z_0(x_k,\cdot)\in L^{r'}(I)$ denote the state observations at $x_k$, $k=1,\ldots, M$. 
		\item The terms  $u_b\in L^2(\Omega)$ and $B^{-1} \in\mathcal{L}(L^2(\Omega),L^2(\Omega))$ denote the (given) background information and the inverse of the background error covariance operator, respectively. Additionally, we assume that $B^{-1}$ is a self-adjoint and positive definite operator, which defines a continuous and coercive bilinear form $$(v,u)_{B^{-1}}:=\int_{\Omega}v(x)(B^{-1}u)(x)~dx.$$ Furthermore, observe that $\|u-u_b\|_{B^{-1}}^2:=\int_{\Omega}(u-u_b)B^{-1}(u-u_b)~dx$.
	\end{itemize}
\end{assumption}

We point out that imposing constraints for the control variable is necessary to prove existence of a solution for the DA problem (see Theorem~\ref{teo:201}). Alternatively, an $L^\beta$ cost term could be added to the objective function. 

\subsection{Analysis of the state equation}
We first show well-posedness of the PDE-constraint. In particular, we guarantee continuity of the state $y$ with respect to the spatial variable for initial conditions in $L^\beta(\Omega)$. Before starting our analysis, let us agree on the notation
$$\mathbb{W}^r_0:=\mathbb{W}^r(W_0^{1,\beta}(\Omega),W^{-1,\beta}(\Omega)), \quad \mathbb{W}^{r'}_0:=\mathbb{W}^{r'}(W_0^{1,\beta'}(\Omega),W^{-1,\beta'}(\Omega)),$$
in accordance with Definition~\eqref{defWr}. We point out that for ease of presentation we will often omit the domain $\Omega$ and write e.g. $W_0^{1,\beta}$ instead of $W_0^{1,\beta}(\Omega)$. 
\begin{theorem}\label{teo:wp_DA}
	Let Assumptions~\ref{assum:001-b} and~\ref{assum-beta} hold. Then, for each $\ell\in L^r(I;W^{-1,\beta}(\Omega))$ and $u\in L^\beta(\Omega)$, there exists a unique $y\in \mathbb{W}^r_0\hookrightarrow L^{r'}(I;C(\bar{\Omega}))$ that solves problem~\eqref{eq:202}. Moreover, this solution satisfies the estimate:
	\begin{equation}\label{eq:est_state}
		\| y\|_{\mathbb{W}^r_0}+\| y\|_{L^{r'}(I;C(\bar{\Omega}))}\leq c(\|\ell\|_{L^r(I;W^{-1,\beta}(\Omega))} +\| u\|_{L^{\beta}(\Omega)}),
	\end{equation}
	for some constant $c>0$ independent of $y$, $u$, and $\ell$.
\end{theorem}
\begin{proof}
	We first prove existence of a unique solution $y\in \mathbb{W}^r_0$ of problem~\eqref{eq:202} 
	for every $u$ and $\ell$ as assumed. Due to Proposition~\ref{assum:op_A}  it is sufficient to show that $L^\beta\hookrightarrow(W^{-1,\beta},W_0^{1,\beta})_{\frac{1}{r'},r}$. In fact,~\cite[Theorem 1.4]{bechtel2019} 
	and Proposition~\ref{prop:001} guarantee
	\begin{align*}
		&{}L^{\beta}=[W^{-1,\beta},W_0^{1,\beta}]_{\frac12}\hookrightarrow (W^{-1,\beta},W_0^{1,\beta})_{\frac12,\infty}\overset{d}{\hookrightarrow} (W^{-1,\beta},W_0^{1,\beta})_{\frac{1}{r'},1},
	\end{align*}
	utilizing 
	$\frac{1}{r'}<\frac12$.
	Now, note that Proposition~\ref{prop:002} ensures 
	$$\mathbb{W}^r_0\hookrightarrow L^{r'}\left(I;(W^{-1,\beta},W_0^{1,\beta})_{\theta,1}\right),\quad
	\text{if }0<\theta - \frac{1}{r'}<\frac{1}{r'}\leq1.$$ Moreover,~\cite[Theorem 2.8.1]{triebel1978} and Proposition~\ref{prop:001} yield $$(W^{-1,\beta},W_0^{1,\beta})_{\theta,1} \overset{d}{\hookrightarrow}[W^{-1,\beta},W_0^{1,\beta}]_{\theta}\hookrightarrow C^{\gamma}(\bar{\Omega})\hookrightarrow C(\bar{\Omega})\quad \text{if }\gamma:=2\theta -1 -\frac{d}{\beta}>0.$$
	Hence, it only remains to show that there is $\theta>0$ such that
	\begin{equation*}
		\frac{1}{r'}<\theta<\frac{2}{r'}\leq 1+\frac{1}{r'}\quad\text{and}\quad \theta>\frac12+\frac{d}{2\beta}.
	\end{equation*}
	To prove this, note first that $\frac{2}{r'}\leq 1+\frac{1}{r'}\;\Leftrightarrow\; r'\ge 1$ is satisfied since $r'>2$. Moreover, since $\frac{1}{r'}<\frac12$ and $\frac12+\frac{d}{2\beta}>\frac12$, it suffices to prove that $\frac12+\frac{d}{2\beta}<\frac{2}{r'}=2-\frac2r,$ but this is equivalent to $r>\frac{4\beta}{3\beta-d}$, Assumption~\ref{assum-beta}. 
	Finally,~\eqref{eq:est_state} is obtained from ~\eqref{eq:estimate} and the embeddings  $L^\beta\hookrightarrow(W^{-1,\beta},W_0^{1,\beta})_{\frac{1}{r'},r}$ as well as $\mathbb{W}^r_0\hookrightarrow L^{r'}(I;C(\bar{\Omega}))$.
\end{proof}

\begin{remark} Let us put this result into perspective.
	Our regularity assumptions allow us to work with initial conditions in $L^\beta(\Omega)$, where $d<\beta<\frac{2d}{d-2}$, and right-hand sides in $L^r(I;W^{-1,\beta}(\Omega))$, with $\frac{4\beta}{3\beta-d}<r<2$. 
	The framework of~\cite{dondl2023} guarantees $L^p(I;C^\alpha(\Omega))$ regularity for initial conditions in $L^q$ and right-hand sides in  $L^r(I;L^q(\Omega))$, with $r\geq2$, $q>\frac{d}{2}$, and $p=\min(r,\tilde p)$, $\tilde p<\frac{2q}{d}$. It thus allows for less regularity of the initial condition, at the expense of higher regularity of the right-hand-sides and less time-regularity of the solution.
	When comparing our setting to that of e.g.~\cite{krumbiegel2013second}, observe that the authors consider homogeneous initial conditions and require right-hand-sides in $L^r(I;L^p(\Omega))$ with $r>\frac{2q}{q-d}$ $p\ge \frac{dq}{d+q}$ for some $d<q<q_0$ that takes the role of our exponent $\beta$. In particular, the time-regularity of the right-hand-side needs to be large to obtain H\"older continuous-in-time-and space solutions. In view of H\"older continuity on the whole space time cylinder, extending the results to nonhomogeneous initial conditions would require H\"older continuity of $u$.
\end{remark}
\subsection{Existence of a unique optimal control}
Based on the results of Theorem~\ref{teo:wp_DA}, we introduce the control-to-state mapping $$S\colon L^{\beta}(\Omega)\longrightarrow \mathbb{W}^r_0
,\quad  u\to Su=y,$$ where $y$ solves the state equation~\eqref{eq:202} for given initial condition $u$. Recall that $y\in \mathbb{W}_0^r$ implies $y\in L^{r'}(I;C(\bar{\Omega}))$ by embedding. Due to estimate~\eqref{eq:est_state} and since $\ell\in L^r(I;W^{-1,\beta}(\Omega))$ is given, this affine-linear operator is continuous. We can therefore rewrite the DA problem into a typical reduced form:
\begin{equation}\label{eq:reduced}
	\min_{u\in U_{\text{ad}}} f(u):=\dfrac{1}{2}\displaystyle\iint_Q\sum_{k}[Su-z_o]^2\otimes\delta(x-x_k)~dxdt+ \frac{1}{2}\|u-u_b\|_{B^{-1}}^2.
\end{equation}
Here and in the following $Q:=I\times \Omega$ denotes the space-time-cylinder and $\delta(x-x_k)$ denotes the Dirac measure concentrated at $x_k\in\Omega$; additionally, we will adopt the following notation,
$$\displaystyle\varphi(x_k)=\int_{\Omega}\varphi(x)\otimes\delta(x_k-x)~dx=\int_{\Omega}\delta(x_k-x)\varphi(x)~dx, \quad\forall\varphi\in C(\bar{\Omega}),$$
see~\cite[p.79]{Lions}. For brevity, we will write $\delta_{x_k}:=\delta(x-x_k)$. We use the notion $\mathcal{M}(\Omega)$ for the space of regular Borel measures and denote the space of weakly measurable functions by $L^{r'}(I;\mathcal{M}(\Omega))$.
\begin{lemma}\label{lem:A1}
	Let Assumptions~\ref{assum:001-b},~\ref{assum-beta} and~\ref{optcontrol} hold and let $u\in L^\beta(\Omega)$ with associated state $y=S{u}\in \mathbb{W}^r_0$
 be given. Then, $\sum_{k} \left[S{u}(x_k,\cdot)-z_o(x_k,\cdot)\right]\delta_{x_k}\in L^{r'}(I;W^{-1,\beta'}(\Omega)).$
\end{lemma}

\begin{proof}
	It is sufficient to show that $(y(x_k,\cdot)-z_o(x_k,\cdot))\delta_{x_k}\in L^{r'}(I;\mathcal{M}(\Omega))\hookrightarrow L^{r'}(I;W^{-1,\beta'}(\Omega))$. In fact, since $\beta>d$,  it follows that $W_0^{1,\beta}(\Omega)\hookrightarrow C(\bar{\Omega})$~\cite[Corollary 9.14]{brezis},  therefore the embedding $\mathcal{M}(\Omega)\hookrightarrow W^{-1,\beta'}(\Omega)$ is true. Consequently,  $L^{r'}(I;\mathcal{M}(\Omega))\hookrightarrow L^{r'}(I;W^{-1,\beta'}(\Omega))$. It is clear that $({y}(x_k,\cdot)-z_o(x_k,\cdot))\delta_{x_k}$ belongs to these spaces since ${y}-z_o\in L^{r'}(I; C(\bar\Omega)).$ 
\end{proof}
We can now discuss the control problem, guided by standard arguments.
\begin{theorem}\label{teo:201} Let Assumptions~\ref{assum:001-b},~\ref{assum-beta}, and~\ref{optcontrol} hold. Then, the DA problem~\eqref{eq:reduced} admits a unique optimal control $\bar{u}\in U_{\text{ad}}$, with associated state $\bar y=S\bar u\in\mathbb{W}_0^r.$\end{theorem}
\begin{proof}
	This result follows in a classical way, see~\cite[Theorem 2.14]{Tro} for details in a slightly different functional analytic framework. We only point out that 
	$U_{\text{ad}}$ is a non-empty, closed, convex and bounded subset of $L^\beta(\Omega)$ 
	and that $f$ is continuous and strictly convex. 
\end{proof}
\subsection{First order optimality conditions}
In this subsection, we state and prove the first-order optimality conditions 
for the convex DA problem. To do this, let us rewrite problem~\eqref{eq:reduced} in a convenient way, see, e.g.,~\cite{zowe1979regularity}.
We define the function
\[\psi\colon L^\beta(\Omega)\longrightarrow\mathbb{R},\quad u\mapsto \psi(u)=b-\int_\Omega |u(x)|^\beta~dx\]
as in~\cite{zowe1979regularity} and obtain  yet another equivalent formulation of the DA problem:
\begin{equation}\label{eq:reduced2}
	\min f(u),\quad u\in L^{\beta}(\Omega) \quad\text{and}\quad \psi(u)\in K,
\end{equation}
with $K=\{\kappa\in\mathbb{R}:\kappa\geq0\}$ being a closed and convex set of $\mathbb{R}$.  
Existence of a Lagrange multiplier can now be obtained after proving a regularity condition,~\cite{zowe1979regularity}. For our problem, this multiplier will belong to $\mathbb{R}$. 
First, we prove auxiliary results for the adjoint equation. 
\begin{lemma}\label{lem:adjoint_lin}
	Let Assumptions~\ref{assum:001-b} and~\ref{assum-beta} hold. Then,
	for every $\nu\in L^{r'}(I;W^{-1,\beta'}(\Omega))$, the adjoint equation 
	\begin{equation*}
		-\partial_t p +A^* p=\nu,\quad p(T)=0,
	\end{equation*}
	admits a unique solution $p\in\mathbb{W}_0^{r'}\hookrightarrow C(\bar I; L^{\beta'}(\Omega))$.
	There exists a constant $c>0$ such that
	\begin{equation*}
		\|p\|_{\mathbb{W}_0^{r'}}
		\leq c\|\nu\|_{L^{r'}(I;W^{-1,\beta'}(\Omega))}.
	\end{equation*}
\end{lemma}
\begin{proof}
	Since $A$ is a closed operator satisfying (autonomous) $L^r(I; W^{-1,\beta})$ maximal parabolic regularity,~\cite[Lemma 36]{herzog2017existence}, guarantees that the adjoint operator $A^*$ satisfies (autonomous) maximal parabolic $L^{r'}(I;W^{-1,\beta'}(\Omega))$-regularity.
	Thus, there exists  a unique solution $p\in \mathbb{W}_0^{r'}$ that satisfies $\|p\|_{\mathbb{W}_0^{r'}}\leq c\|\nu\|_{L^{r'}(I;W^{-1,\beta'}(\Omega))}.$ Since $r'>2$, Proposition~\ref{prop:001} jointly with~\cite[Theorem 2.4.2.1]{triebel1978} yields
	\[
	(W^{-1,\beta'},W_0^{1,\beta'})_{1-\frac{1}{r'},r'}\hookrightarrow [W^{-1,\beta'},W_0^{1,\beta'}]_{\frac{1}{2}}=L^{\beta'}(\Omega).
	\]
The embedding $\mathbb{W}^{r'}_0\hookrightarrow C(\bar I;L^{\beta'}(\Omega))$ then follows from Proposition~\ref{prop:002}-$(ii)$.
	\end{proof}
\begin{theorem}\label{teo:202}Let Assumptions~\ref{assum:001-b},~\ref{assum-beta} and~\ref{optcontrol} hold.
	A control $\bar{u}\in U_{\text{ad}}\subset L^{\beta}(\Omega)$ with associated state $\bar{y}=S\bar u\in \mathbb{W}^r_0$ is the unique solution of~\eqref{eq:reduced2} if and only if there exists 
	a unique adjoint state $\bar{p}\in \mathbb{W}^{r'}_0$ and a unique multiplier $\bar{\lambda}\in \mathbb{R}$ that satisfy:
		\begin{align}\label{eq:206}
			\displaystyle-\partial_t \bar{p}+A^*\bar{p} =  \sum_{k} \left[S\bar{u}(x_k,\cdot)-z_o(x_k,\cdot)\right]\delta_{x_k},\quad 
			\bar{p}(T)=0,\\
		\label{eq:207}
			\bar{p}(0)+B^{-1}(\bar{u}-u_b)+\bar{\lambda}\beta|\bar{u}|^{\beta-2}\bar{u}= 0 \text{ in }\Omega,\\
		\label{eq:complementarity}
			\bar{\lambda}\geq0,\quad b-\int_\Omega |\bar{u}|^\beta~dx \geq0,\quad \bar{\lambda}\left( b-\int_\Omega |\bar{u}|^\beta~dx\right)=0.
	\end{align}
\end{theorem}
\begin{proof}
	Before we start the proof, notice that the operator $S$ is differentiable since it is linear. Hence, the reduced cost functional is also differentiable by the chain rule. Moreover, the constraint function $\psi$ is differentiable with $\psi'(u)h=-\int_\Omega \beta|u(x)|^{\beta-2}u(x)h(x)~dx$, with $h\in L^{\beta}(\Omega)$.
	
	Now, assume first that $\bar u$ solves Problem~\eqref{eq:reduced2}. 
	Lemmas~\ref{lem:A1} and~\ref{lem:adjoint_lin} guarantee that there is a unique solution 
	$\bar{p}\in\mathbb{W}_0^{r'}$ of~\eqref{eq:206}. 
	To prove the existence of a Lagrange multiplier $\bar{\lambda}\in\mathbb{R}$, we follow Zowe and Kurcyusz in~\cite[pp.50]{zowe1979regularity} and verify the following regularity condition:
	\begin{equation}\label{eq:reg_cond}
		\psi'(\bar{u})L^{\beta}(\Omega)-\mathcal{K}(\psi(\bar{u}))=\mathbb{R},\quad \mathcal{K}(\psi(\bar{u}))=\{\kappa-\vartheta\psi(\bar{u}):\kappa\in K, \vartheta\geq0\}.
	\end{equation}
	The left-hand side of~\eqref{eq:reg_cond} takes the form
	\begin{multline*}
		\psi'(\bar{u})L^\beta(\Omega)-\mathcal{K}(\psi(\bar{u}))=\left\{\beta\int_{\Omega}|\bar{u}|^{\beta-2}\bar{u}h~dx - \kappa+\vartheta b - \vartheta\int_{\Omega} |\bar{u}|^\beta~dx:\right.\\\left.h\in L^{\beta}(\Omega), \vartheta\geq0, \kappa\geq0\right\}.
	\end{multline*}
	To verify~\eqref{eq:reg_cond}, let us take $z\in\mathbb{R}$. If $z\leq0$, it will belong to $\psi'(\bar{u})L^{\beta}(\Omega)-\mathcal{K}(\psi(\bar{u}))$ by setting $\vartheta=0$, $h=0\in L^{\beta}(\Omega)$, and $\kappa=-z\geq0$. In the same way, if $z\geq0$, it will belong to $\psi'(\bar{u})L^{\beta}(\Omega)-\mathcal{K}(\psi(\bar{u}))$ if $\vartheta=\frac{z+\kappa}{b}\geq0$, $\kappa\geq0$, and $h=\frac{\vartheta}{\beta}\bar{u}\in L^\beta(\Omega)$.  Therefore, there exists a Lagrange multiplier $\bar{\lambda}\in\mathbb{R}$, such that
	\begin{equation}\label{eq:complementarityproof}
		\bar{\lambda}\geq0,\quad  \bar{\lambda}\left(b-\int_\Omega |\bar{u}|^\beta~dx\right)=0,\quad \langle f'(\bar{u})-\bar{\lambda}\psi'(\bar{u}),h\rangle_{L^{\beta'},L^\beta}=0, \quad\forall h\in L^\beta(\Omega).
	\end{equation}
	Note that $\psi(\bar{u})=b-\int_\Omega |\bar{u}|^\beta~dx\geq0$ since $\bar u$  is feasible. Together with~\eqref{eq:complementarityproof}, this implies the complementarity system~\eqref{eq:complementarity}.
	To prove the gradient equation~\eqref{eq:207} we calculate the derivative of $f$ at $\bar u\in U_{\text{ad}}$ in some direction $h\in L^{\beta}(\Omega)$:
\begin{equation*}
		f'(\bar{u})h=
\int_0^T\langle \sum_{k} ((S\bar{u})(x_k,t)-z_o(x_k,t))\delta_{x_k}, Sh(\cdot, t)\rangle_{W^{-1,\beta'}, W_0^{1,\beta}}~dt
		+\int_{\Omega}(\bar{u}-u_b)B^{-1}h~dx.
			\end{equation*}
	Note that $Sh$ belongs to $L^r(I;W_0^{1,\beta}(\Omega))$ due to Theorem~\ref{teo:wp_DA} and  $\sum_{k}\left[(S\bar{u})(x_k,\cdot)-z_o(x_k,\cdot)\right]\otimes\delta_{x_k}$ belongs to $L^{r'}(I;W^{-1,\beta'}(\Omega))$ due to Lemma~\ref{lem:A1}. Hence, the above integral is well-defined and, 
	following typical procedures using the adjoint equation~\eqref{eq:206}, we see 
	\begin{align*}
		f'(\bar{u})h
=\displaystyle\int_0^T \langle (-\partial_t+A^*)\bar{p},Sh\rangle_{W^{-1,\beta'}, W_0^{1,\beta}}~dt+\int_{\Omega}\left (\bar{u}-u_b\right)B^{-1}h~dx.
	\end{align*}
	Noting that $\bar p\in\mathbb{W}_0^{r'}$ and $Sh\in \mathbb{W}_0^r$, we apply a Green's identity from~\cite[Prop. 5.1]{amann2005nonautonomous} and conclude from~\eqref{eq:206} that
	\begin{align*}
		f'(\bar{u})h&=\displaystyle\int_0^T
		\langle \bar{p},(\partial_t+A)(Sh)\rangle_{W_0^{1,\beta'},W^{-1,\beta}}~dt+ \langle \bar{p}(0),(Sh)(0)\rangle_{(W^{-1,\beta'},W_0^{1,\beta'})_{\frac1r,r'},(W^{-1,\beta},W_0^{1,\beta})_{\frac{1}{r'},r}}\\&-\langle \bar{p}(T),(Sh)(T)\rangle_{(W^{-1,\beta'},W_0^{1,\beta'})_{\frac1r,r'},(W^{-1,\beta},W_0^{1,\beta})_{\frac{1}{r'},r}} +\int_\Omega\left(\bar{u}-u_b\right)B^{-1}h)~dx\\
		=&\displaystyle\int_{\Omega}\left(\bar{p}(0)+(\bar{u}-u_b)B^{-1}\right)h~dx 
	\end{align*}
holds for all $h\in L^{\beta}(\Omega)$, since $(Sh)(0)=h$. Observe that $h\in L^\beta(\Omega)\hookrightarrow (W^{-1,\beta},W_0^{1,\beta})_{\frac{1}{r'},r}$ and $\bar{p}(0)\in (W^{-1,\beta'},W_0^{1,\beta'})_{\frac1r,r'} \hookrightarrow L^{\beta'}(\Omega)$ due to embedding interpolation results. Note that $\bar p(0)\in L^{\beta'}(\Omega)$ is well-defined since $\bar p\in \mathbb{W}^{r'}\hookrightarrow C(\bar{I};L^{\beta'}(\Omega)),$ see Lemma~\ref{lem:adjoint_lin}. Inserting this and the precise formulation of $\psi'$ into ~\eqref{eq:complementarityproof} yields
	\[
	\displaystyle\int_{\Omega}\left(\bar{p}(0)+(\bar{u}-u_b)B^{-1}\right)h~dx + \bar{\lambda}\beta\int_{\Omega}|\bar{u}|^{\beta -2}\bar{u}h~dx=0 ,\qquad\forall h\in L^{\beta}(\Omega).
	\]
 From this,~\eqref{eq:207} follows.   Finally, the Lagrange multiplier $\bar{\lambda}\in\mathbb{R}$ is unique if $\psi'(\bar{u})\neq0$ due to the gradient equation~\eqref{eq:207}. On the other hand, $\psi'(\bar{u})=\beta|\bar{u}|^{\beta -2}\bar{u}=0$ then $\bar{u}=0$; therefore, from~\eqref{eq:complementarity} and $b>0$, we find $\bar{\lambda}=0$, hence it is also unique in this case. To conclude the proof, we only point out the well-known fact that necessary conditions are also sufficient for convex problems.
\end{proof}

\section{$4D$-VAR Problem: Nonconvex case}\label{sec:nonlinear}
\setcounter{equation}{0}
In this section, we study the optimal control problem
\begin{equation}\label{eq:201-semilin}
	\min_{u\in U_{\text{ad}}} J(y,u)=\dfrac{1}{2}\displaystyle\int_0^T\sum_{k} [y(x_k,t)-z_o(x_k,t)]^2~ dt+ \frac{1}{2} \|u-u_b\|_{B^{-1}}^2,
\end{equation}
subject to the semilinear PDE constraint 
\begin{equation}\label{eq:semi_linear}
		\partial_ty+ Ay+\mathbf{g}(y)= \ell,\quad 
		y(0)=u.
\end{equation}
\subsection{Assumptions on the semilinear PDE}
We impose the following set of assumptions on the nonlinear term $g(y)$.
\begin{assumption}\label{assum:002}
	\hfill
	\begin{enumerate}
		\item Let $g\colon \mathbb{R}\rightarrow\mathbb{R}$ 
		be a measurable function. Moreover, $g$ is twice differentiable and there exists $\mathscr{K}>0$ such that
		\[|g^{(k)}(\cdot)|\leq \mathscr{K},\qquad \text{for all }~ 0\leq k\leq2.\]
		\item  Additionally, $g''$ is globally Lipschitz continuous, i.e., there exists $L>0$ such that
		\[|g''(y_1)-g''(y_2)|\leq L |y_1 -y_2|,\qquad\forall y_1, y_2\in\mathbb{R}.\]
		\item The nonlinearity is monotone increasing.
	\end{enumerate}
\end{assumption}
Note that global Lipschitz continuity of $g$ and $g'$ follows from these assumptions. We introduce Nemytskii operators $\hat g$ and $\mathbf{g}$ and $\hat g$, associated with $g$, by 
\begin{align*}
	&\hat{g}(z)(x)=g(z(x)),\qquad \hat{g}\colon L^{\beta}(\Omega)\longrightarrow L^{\infty}(\Omega),\\
&\mathbf{g}(w)(t)=\hat{g}(w(t)),\qquad	\mathbf{g}\colon \mathbb{W}_0^r\longrightarrow L^{\infty}(I\times\Omega)\hookrightarrow L^\infty(I; L^{\beta}(\Omega))\hookrightarrow L^r(I;W^{-1,\beta}(\Omega)).
\end{align*}
More precisely, $\mathbf{g}(y)$ is tacitly identified with an object in $L^\infty(Q)$. 
Under these assumption and if the right-hand-side $\ell$ is in $L^2(I; L^2(\Omega))$, it is clear that~\eqref{eq:201-semilin} admits a unique solution in the space
$$
L^2(I;H_0^1(\Omega))\cap L^\infty(I; L^2(\Omega)),$$ that actually is a weak solution in the space
$$\mathbb{W}(0,T):=L^2(I;H_0^1(\Omega))\cap H^1(I; H^{-1}(\Omega))\hookrightarrow C(\bar I;L^2(\Omega)),$$ since $u\in L^\beta(\Omega)\hookrightarrow L^2(\Omega)$,  see e.g. ~\cite[Lemma 5.3]{Tro}. These results, shown in~\cite{Tro} for Neumann boundary conditions, can be adapted to homogeneous Dirichlet boundary conditions. The solution fulfills
\begin{equation}
	\label{eq:classicalnormnonlin}
	\|y\|_{L^2(I; H_0^1(\Omega))}+\|y\|_{C(\bar I; L^2(\Omega))}+\|y\|_{H^1(I; H^{-1}(\Omega))}\le c\left(\|\ell\|_{L^2(I; L^2(\Omega))}+\|u\|_{L^2(\Omega))}\right),
\end{equation}
see~\cite[Lemma 7.10]{Tro}. Analogous results hold for linear PDEs of the form
\begin{equation*}
		\partial_t w+ Aw+\mathbf{g}_\infty w= \ell,\quad 
		w(0)=h,
	\end{equation*}
where $\mathbf{g}_\infty$ is the multiplier associated with a function $g_\infty\in L^\infty(Q)$ that we tacitly identify with each other,  $\ell\in L^2(I; L^2(\Omega))$, and  $h\in L^2(\Omega)$. In the following, such $\mathbf{g}_\infty$ will be generated by $g'$. We will make use of the estimates
\begin{equation}
	\label{eq:semi_est_classicalaux}
	\|w\|_{L^2(I; H_0^1(\Omega))}+\|w\|_{C(\bar I; L^2(\Omega))}
	\le c\left(\|\ell\|_{L^2(I; L^2(\Omega))}+\|u\|_{L^2(\Omega)}\right),
\end{equation}
see~\cite[Theorem 3.12-3.13]{Tro} or~\cite[Theorems 2.1. \& 4.1 ]{ladyzhenskaia1968linear}.  
Note that Theorem~\cite[Theorem 2.1]{ladyzhenskaia1968linear} also allows for right-hand-sides $\ell$ with different regularities, in particular 
\begin{equation}
	\label{eq:classicalnormlin2}
	\|w\|_{L^2(I; H_0^1(\Omega))}+\|w\|_{C(\bar I; L^2(\Omega))}
	\le c\left(\|\ell\|_{L^1(I; L^2(\Omega))}+\|u\|_{L^2(\Omega)}\right),
\end{equation}
for $d=2,3$.  
We will need these regularity results and estimates for our results on second order sufficient optimality conditions. In the following, we will discuss improved existence and regularity properties of the state equation in the same spaces as in the convex setting.

Before we do this, note that in the same function spaces we can define Nemytskii operators generated by the first and second derivatives $g'$ and $g''$ of $g$. We denote these by $\mathbf{g}', \mathbf{g}''$, and $\hat g',\hat g''$ respectively. 
That these are, in fact, derivatives of $\hat g$ and  $\mathbf{g}$ when considered in appropriate function space settings will be verified in Proposition~\ref{prop:Dif_Nemytskii}. 
\begin{remark}
	\label{rem:nemlip}We point out that the operators $\hat g,\hat g'$, and $\hat g''$ are in fact Lipschitz continuous in \textit{any} $L^p(\Omega)$-space, see~\cite[Lemma 4.11]{Tro}, where this is shown in the space $L^\infty(\Omega)$ and the final remark on~\cite[p.198]{Tro}, which shows how to extend this result to any $L^p(\Omega)$--space when the generating function is uniformly bounded. 
	Using this, we obtain Lipschitz continuity of $\mathbf{g},\mathbf{g}',$ and $\mathbf{g}''$ in any $L^q(I; L^p(\Omega))$ space by straightforward calculations. We will tacitly and frequently use this in the following, only limited by the regularity of $y$.
\end{remark}
\subsection{Analysis of the semilinear state equation}
This section discusses the well-posedness of equation~\eqref{eq:semi_linear} under Assumptions~\ref{assum:002},~\ref{assum:001-b}, and~\ref{assum-beta}. 
We now fix 
\begin{equation*}
	s:=\frac{d\beta}{d+\beta},\quad 
	\tilde s:=\frac{2s}{2-s},
	\quad q:=\frac{4\beta}{\beta-d},\quad\tilde q:=\frac{qr}{q-r}
\end{equation*}
for the remainder of the paper, such that
$$\frac{1}{s}=\frac{1}{\tilde s}+\frac{1}{2},\quad \frac{1}{r}=\frac{1}{\tilde q}+\frac{1}{q}$$
 and $\tilde s>s$, $\tilde q>r$. In fact, we have $\tilde s=\beta>2$ if $d=2$, and $\tilde s=\frac{6\beta}{6-\beta}>6$ for $d=3$. 

For ready use in the following, observe the following classical Sobolev embedding~\cite[Corollary 9.14]{brezis} 
\begin{equation}\label{eq:embeddings}
	L^s(\Omega)\hookrightarrow W^{-1,\beta}(\Omega),\;
	\quad W_0^{1,\beta'}(\Omega)\hookrightarrow L^{s'}(\Omega),\;\quad W_0^{1,\beta}(\Omega)\hookrightarrow L^p(\Omega),\;\forall \, 1\le p\le \infty, 
\end{equation}
and the embeddings
\begin{equation}
	\label{wrembeddings}\mathbb{W}_0^r\hookrightarrow L^q(I; L^\beta(\Omega)),\quad 
	\mathbb{W}_0^r\hookrightarrow L^{\tilde q}(I; L^s(\Omega)),
	\quad \mathbb{W}_0^r\hookrightarrow L^{r'}(I; C(\bar\Omega)),
\end{equation}
that hold due to Propositions~\ref{prop:002} and~\ref{prop:001} and the proof of Theorem~\ref{teo:wp_DA}, that we will frequently and tacitly use.
\begin{proposition}\label{prop:Dif_Nemytskii}
	Under Assumptions~\ref{assum:001-b},~\ref{assum-beta} and~\ref {assum:002},
	$\mathbf{g}$ is twice continuously Fr\'echet differentiable from $\mathbb{W}_0^r$ to $L^r(I; W^{-1,\beta}(\Omega))$.
\end{proposition}
\begin{proof}
	We first discuss first and second order differentiability of $\hat g$, in view of the embeddings~\eqref{eq:embeddings} from $L^\beta(\Omega)$ to $L^s(\Omega)$. In~\cite[Lemma 4.12]{Tro} and~\cite[Theorem 4.22]{Tro} such results are established in $L^\infty(\Omega)$. Moreover, it is discussed how to extend this to general $L^p$-spaces. In particular, since $g$ fulfills the boundedness condition of order two, first and second order differentiability hold from $L^\beta(\Omega)$ to $L^s(\Omega)$, hence to $W^{-1,\beta}(\Omega)$ by embedding, if $s<\beta$ or $2s<\beta$, respectively, see~\cite[Section 4.3.3]{Tro} and the final remark on~\cite[p.230]{Tro}. The condition $s<2s<\beta$ is easily verified for $d=2,3$.
	Then, first- and second-order Fr\'echet differentiability of $\mathbf{g}$ from  $L^q(I; L^\beta(\Omega))$ to  
	$L^{r}(I; L^s(\Omega))$, hence to
	$L^r(I;W^{-1,\beta}(\Omega))$ by embedding, follow from Theorems 7 and 9 in~\cite{goldberg1992nemytskij}, respectively. In addition to the properties of $\beta$ and $s$, observe that $r<2r<q$ is satisfied, since $r<2$, hence $2r<4$ and $q=\frac{\beta}{\beta-d}4>4$. We point out that the uniform boundedness of $g'$ and $g''$ is also required to apply Theorems 7 and 9 in~\cite{goldberg1992nemytskij}. The result then follows from $\mathbb{W}_0^r\hookrightarrow L^q(I; L^\beta(\Omega))$.
\end{proof}
We now prove well-posedness of the semilinear state equation. The operator $$\mathcal{A}:=A+\mathbf{g}'(y)$$ is non-autonomous since it depends on $t$ via $y$, so one has to argue that it in fact has non-autonomous maximal parabolic regularity. 
This could be done by means of perturbation arguments,~\cite[Prop. 1.3]{arendt2007lp}. We refer again to e.g~\cite[Theorem 2.15]{krumbiegel2013second}, where this has been done for a non-autonomous operator of the type $t\mapsto A+\tilde{g}_\infty(t)$, under the stronger regularity assumptions that we have already discussed. Here, $g_\infty\in L^\infty(Q)$ is given and $\tilde{g}_\infty$ denotes the induced multiplication operator. Our problem has the same structure, 
yet in view of the main goals of this paper we proceed more hands-on. 
\begin{proposition}\label{prop:y-mild}
	Let Assumptions~\ref{assum:001-b},~\ref{assum-beta} and~\ref {assum:002} hold. For all $u\in L^\beta(\Omega)$ and $\ell\in L^r(I,W^{-1,\beta}(\Omega))$, there exists a unique mild solution $w\in C(\bar{I};L^\beta(\Omega))$ of 
	\begin{equation}\label{eq:semilinear-aux}
		\partial_tw+ Aw+\mathbf{g}(w)= \ell,\quad w(0)=u.
	\end{equation}
There is a constant $c>0$ independent of $u$ and $\ell$ such that 
		\begin{equation}
			\label{eq:westinfbeta}
			\|w\|_{C(\bar I, L^\beta(\Omega))}\le c(\|u\|_{L^\beta(\Omega)}+\|\ell\|_{L^r(I; W^{-1,\beta}(\Omega))} +1).
		\end{equation}
\end{proposition}
\begin{proof}
	The proof follows classical arguments, see e.g.~\cite{pazy}.   
	We define the Picard iteration:
	\[
		w_0(t)=u,\quad 
		w_{k+1}(t)=e^{-tA}u+\displaystyle\int_0^t e^{-(t-\tau)A}(\ell -\mathbf{g}(w_{k})(\tau))~d\tau.
		\]
	Taking $t_0=0$ and assuming that $w_k(t)$ is defined in $I_\alpha:=(t_0,t_0+\alpha)$ for some $\alpha>0$, we will prove that the mapping $\mathcal{J}$, 
	\begin{equation*}
		(\mathcal{J}w_{k+1})(t)=e^{-tA}u+\int_0^t e^{-(t-\tau)A}(\ell-\mathbf{g}(w_k)(\tau))~d\tau,
	\end{equation*}	
	is a contraction on $\bar I_\alpha$,
	In fact, the Lipschitz continuity of $\hat g$ in $L^\beta(\Omega)$ of Remark~\ref{rem:nemlip} combined with the embedding $L^\beta(\Omega)\hookrightarrow W^{-1,\beta}(\Omega)$ and Proposition~\ref{prop:bound_A2} yield
	\begin{align*}
		&\|\mathcal{J}w_{k+1}(t)-\mathcal{J}w_{k}(t)\|_{L^{\beta}(\Omega)}\leq \int_0^t\|e^{-(t-\tau)A}\|_{\mathcal{L}(W^{-1,\beta},L^\beta)}\|\mathbf{g}(w_k)(\tau)-\mathbf{g}(w_{k-1})(\tau)\|_{W^{-1,\beta}} d\tau\\
		&\leq c{L}\int_0^t\|w_k(\tau)-w_{k-1}(\tau)\|_{L^\beta(\Omega)}~d\tau\leq c{L}\alpha\|w_k-w_{k-1}\|_{C( \bar I_\alpha;L^\beta(\Omega))}.
	\end{align*}
	Taking $\alpha<\frac{1}{cL}$, the operator $\mathcal{J}$ is a contraction on $\bar I_\alpha$. Then, $\mathcal{J}$ has a unique fix point $w\in C(\bar I_\alpha;L^\beta(\Omega))$ such that
	\begin{equation*}
		w(t)=e^{-tA}u+\int_0^t e^{-(t-\tau)A}(\ell-\mathbf{g}(w)(\tau))d\tau,\quad t_0\leq t\leq t_0+\alpha.
	\end{equation*}
	By repeating this procedure iteratively, this solution can be extended for $t\in[\underbrace{t_0+\alpha}_{t_1}, t_1+\alpha_1]=:\bar I_{\alpha_1}$, for adequate values of $\alpha_1>0$, and a concatenation argument yields a solution $w\in C(\bar{I};L^\beta(\Omega))$ of~\eqref{eq:semilinear-aux} in the sense of Definition~\ref{def:mild}. Estimate~\eqref{eq:westinfbeta} follows from the integral representation and the boundedness of $\mathbf{g}$ and~\eqref{eq:bound_A2} after direct calculations.
\end{proof}

\begin{lemma}\label{lem:non_linear}
	Let Assumptions~\ref{assum:001-b},~\ref{assum-beta}, and~\ref {assum:002} hold. Then, the unique mild solution $y\in $ of equation~\eqref{eq:semi_linear} 
	satisfies the additional regularity $y\in 
	\mathbb{W}_0^r$ 
 	and is the unique weak and strong solution in this space.
	Moreover, there exists a constant $c>0$ independent of $u$ and $\ell$  such that 
	\begin{equation*}
		\|y\|_{\mathbb{W}^r_0}
		\leq c(1+\|u\|_{L^\beta(\Omega)}+\|\ell\|_{L^r{I;W^{-1,\beta}(\Omega)}}).
	\end{equation*}
\end{lemma}
\begin{proof} 
	We use a bootstrapping argument, i.e. we consider the linear problem 
	\begin{equation*}
		\partial_ty+ Ay= \ell-\mathbf{g}(y),\quad y(0)=u.\end{equation*}
	Since $g$ is uniformly bounded $\mathbf{g}(y)$ can be identified with an element in $L^2(I; L^2(\Omega))\hookrightarrow  L^r(I;W^{-1,\beta}(\Omega))$, where the embedding holds since $r<2$ and $\beta>2$. Theorem~\ref{teo:wp_DA} guarantees 
	$y\in\mathbb{W}_0^r\hookrightarrow L^{r'}(I;C(\bar{\Omega}))$, along with uniqueness in these spaces. 
	The norm estimate 
	follows from~\eqref{eq:estimate} in Theorem~\ref{teo:wp_DA} with uniform boundedness of $g$.
\end{proof}
We summarize all previously shown existence and regularity results:
\begin{theorem}
	\label{teo:non_linear}
	Let Assumptions~\ref{assum:001-b},~\ref{assum-beta}, and~\ref {assum:002} hold. Then, for all $u\in L^\beta(\Omega)$ and $\ell\in L^r(I,W^{-1,\beta}(\Omega))$, there exists a unique solution	$$y\in 
	\mathbb{W}_0^r
	\cap C(\bar I; L^\beta(\Omega))$$
of equation~\eqref{eq:semi_linear}.
Moreover, there exists a constant $c>0$  independent of $u$ and $\ell$ such that 
	\begin{align}\label{eq:semi_est}
		\|y\|_{\mathbb{W}^r_0}
		+\|y\|_{C(\bar I;L^\beta(\Omega))}&\leq c(1+\|u\|_{L^\beta(\Omega)}+\|\ell\|_{L^r(I; W^{-1,\beta}(\Omega))}).
	\end{align}
\end{theorem}
\begin{proof}
	In view of Lemma~\ref{lem:non_linear} and Proposition~\ref{prop:y-mild} it is sufficient to briefly point out that $L^\beta(\Omega)\hookrightarrow W^{-1,\beta}(\Omega)$ and any strong solution in the sense of Definition~\ref{def:w-s} is a mild solution in the sense of Definition~\ref{def:mild} with $Z=X=W^{-1,\beta}(\Omega)$.
\end{proof}
Based on Theorem~\ref{teo:non_linear}, we can introduce the control-to-state mapping $$S\colon L^{\beta}(\Omega)\longrightarrow\mathbb{W}^r_0,
\quad u\mapsto S(u)=y,$$
as the solution operator associated with the semilinear parabolic problem~\eqref{eq:semi_linear} for fixed $\ell\in L^r(I;W^{-1,\beta}(\Omega))$. This is not to be confused with the definition of $S$ in the convex setting. Note the regularity $y\in L^{r'}(I;C(\bar\Omega))\cap C(\bar I;L^\beta(\Omega))$ 
according to embedding~\eqref{wrembeddings} and  Theorem~\ref{teo:non_linear}.
\subsection{Continuity of the control-to-state mapping}
In this subsection we study certain continuity properties of $S$. They will be used to prove existence of solutions to the nonconvex DA problem. Their proofs follow standard arguments and can be found, in a slightly different framework, for instance, in~\cite{Tro, meyer2017optimal}.
\begin{lemma}\label{lem:002}
	Let Assumptions~\ref{assum:001-b},~\ref{assum-beta}, and~\ref {assum:002} hold. There exist $c_1,c_2>0$ such that
	\begin{align}\label{eq:Slipschitz:classic}
		\|S(u_1)-S(u_2)\|_{L^2(I;H_0^1(\Omega))}+\|S(u_1)-S(u_2)\|_{C(\bar I;L^2(\Omega))}&\leq c_1\|u_1-u_2\|_{L^{2}(\Omega)},\\
		\label{eq:Slipschitz}\|S(u_1)-S(u_2)\|_{C(\bar I;L^\beta(\Omega))}+\|S(u_1)-S(u_2)\|_{\mathbb{W}^r_0}&\leq c_2\|u_1-u_2\|_{L^{\beta}(\Omega)}\end{align}
	holds for all $u_1, u_2\in L^{\beta}(\Omega).$
\end{lemma}
\begin{proof}
  Denote by $y_i:=S(u_i)$ 
	the states associated with controls $u_i\in L^\beta(\Omega)$, $i=1,2$. Note that both $y_1,y_2$ satisfy the regularity properties from Theorem~\ref{teo:non_linear}. We find that the difference $\hat{y}:=y_1-y_2$ satisfies
	\begin{equation}\label{eq:S_lips_aux}
		\partial_t\hat{y}+ A\hat{y} = \mathbf{g}(y_2)-\mathbf{g}(y_1).\qquad
		\hat{y}(0)=u_1-u_2.
	\end{equation}
Note in particular that the term $\ell$ cancels out.
	Estimate~\eqref{eq:Slipschitz:classic} is obtained with rather straight forward, classical arguments and~\eqref{eq:classicalnormnonlin}.
	Due to the differentiability of $\mathbf{g}$ from $L^q(I; L^\beta(\Omega))$ to $L^{\tilde q}(I;L^s(\Omega))$, see Proposition~\ref{prop:Dif_Nemytskii}, 
	the right-hand side 
	can be written as
	\[\mathbf{g}(y_2)-\mathbf{g}(y_1)=\int\limits_0^1 \mathbf{g}'(y_1+\tau(y_2-y_1))d\tau(y_2-y_1):=-g_0\hat{y},\]
	and $\hat y$ thus solves
	\begin{equation*}
		\partial_t\hat{y}+ A\hat{y} + g_0\hat{y}=0,\qquad\hat{y}(0)=u_1-u_2.
	\end{equation*}
	We rely on uniform boundedness of $g'$ to find that estimate~\eqref{eq:Slipschitz:classic} holds due to~\eqref{eq:semi_est_classicalaux}, with a constant
	that depends on the uniform bound of $g_0$. 
	We can now argue similarly to Theorem~\ref{teo:non_linear}. Since $u_1-u_2\in L^{\beta}(\Omega)$ and the right-hand side of~\eqref{eq:S_lips_aux} belongs to  
	$ L^r(I;W^{-1,\beta}(\Omega))$, we apply Theorem~\ref{teo:wp_DA} to obtain estimate~\eqref{eq:Slipschitz}. 
	Estimate~\eqref{eq:estimate} and the Lipschitz  continuity of $\mathbf{g}$ in $L^r(I;L^\beta(\Omega))$ along with $L^\beta(\Omega)\hookrightarrow W^{-1,\beta}(\Omega)$  then yield
	\begin{align*}
		\|\hat{y}\|_{\mathbb{W}_0^r}&\leq c\left(\|\mathbf{g}(y_1)-\mathbf{g}(y_2)\|_{L^r(I;W^{-1,\beta}(\Omega))} + \|u_1-u_2\|_{L^\beta(\Omega)} \right),\\
		&\le c\left(\|y_1-y_2\|_{L^r(I;L^\beta(\Omega))} + \|u_1-u_2\|_{L^\beta(\Omega)} \right)\\
		&\le c\left(\|y_1-y_2\|_{L^2(I;H_0^1(\Omega))}+ \|u_1-u_2\|_{L^\beta(\Omega)} \right)\le c \|u_1-u_2\|_{L^\beta(\Omega)}
	\end{align*}
	for some $c>0$. Note that we used~\eqref{eq:Slipschitz:classic}
	along with $r<2$ and $H_0^1(\Omega)\hookrightarrow L^\beta(\Omega)$ and $\beta<2d\le 6$. The Lipschitz result in $C(\bar I; L^\beta(\Omega))$ follows after subtracting the integral definitions~\eqref{eq:int_mild} and straight forward arguments, applying Gronwall's inequality, similar to~\cite[Lemma 2.8]{meyer2017optimal}. 
\end{proof}
Next, we prove weak continuity of the solution operator $S$ along the lines of e.g.
\cite{meyer2017optimal}.
\begin{lemma}\label{lem:003} Let Assumptions~\ref{assum:001-b},~\ref{assum-beta}, and~\ref {assum:002} hold.
	The control-to-state mapping $S$ is weakly continuous from $L^\beta(\Omega)$ to $\mathbb{W}^r_0$.
\end{lemma}
\begin{proof}
	Let $u_n\rightharpoonup u$ in $L^{\beta}(\Omega)$ and set $y_n=S(u_n)$ and $y=S(u)$. Thanks to estimate~\eqref{eq:semi_est} and the boundedness of $\{u_n\}_{n\geq1}$ in $L^\beta(\Omega)$, there exists a constant $c>0$ such that $\|y_n\|_{\mathbb{W}^r_0}\leq c$, for all $n\in\mathbb{N}$. Consequently, due to the reflexivity of $\mathbb{W}^r_0$, there exists a weakly convergent subsequence, denoted the same, and a limit point $\tilde{y}\in \mathbb{W}^r_0$ such that $y_n\rightharpoonup\tilde{y}$ in $\mathbb{W}^r_0$, as $n\to\infty$. Additionally, since $\{y_n\}_{n\geq1}$ is bounded in $L^r(I;W^{1,\beta}(\Omega))$ and $\left\{\partial_ty_n\right\}_{n\geq1}$ is bounded in $L^r(I;W^{-1,\beta}(\Omega))\hookrightarrow L^1(I;W^{-1,\beta}(\Omega))$, it holds that $\{y_n\}_{n\geq1}$ is relatively compact in $L^r(I;L^\beta(\Omega))$~\cite[Corollary 4]{simon1986compact}. Consequently, $y_n\rightarrow \tilde{y}$ strongly in $L^r(I;L^{\beta}(\Omega))$. Further, thanks to the Lipschitz continuity of $\mathbf{g}$ from Remark~\ref{rem:nemlip} and the embedding $L^\beta(\Omega)\hookrightarrow W^{-1,\beta}(\Omega)$, we have that $\mathbf{g}(y_n)\rightarrow \mathbf{g}(\tilde{y})$ strongly in $L^r(I;W^{-1,\beta}(\Omega))$. On the other hand, 
	$y_n\in\mathbb{W}_0^r$ 
	satisfies the weak formulation
	\begin{multline*}\label{eq:weak_yn}
		\int_0^T\langle\left(-\partial_t + A^*\right)\varphi,y_n\rangle_{W^{-1,\beta'},W_0^{1,\beta}}~dt=\int_0^T\langle\varphi,\mathbf{g}(y_n)\rangle_{W_0^{1,\beta'},W^{-1,\beta}}~dt \\+ \langle\varphi(0),u_n\rangle_{(W^{-1,\beta'},W_0^{1,\beta'})_{\frac{1}{r},r'},(W^{-1,\beta},W_0^{1,\beta})_{\frac{1}{r'},r}}\quad\forall \varphi\in\mathcal{D}([0,T[;W_0^{1,\beta'}(\Omega)),
	\end{multline*}
for any $n\in\mathbb{N}$.
	Passing to the limit as $n\to\infty$, and using the convergence of  $y_n\rightharpoonup \tilde{y}$ in $\mathbb{W}_0^r$, $\mathbf{g}(y_n)\rightarrow \mathbf{g}(\tilde{y})$ in $L^r(I;W^{-1,\beta}(\Omega))$, and $u_n\rightharpoonup u$ in $L^\beta(\Omega)$, it holds that
	\begin{multline*}
		\int_0^T\langle\left(-\partial_t + A^*\right)\varphi,\tilde{y}\rangle_{W^{-1,\beta'},W_0^{1,\beta}}~dt=\int_0^T\langle\varphi,\mathbf{g}(\tilde{y})\rangle_{W_0^{1,\beta'},W^{-1,\beta}}~dt \\+ \langle\varphi(0),u\rangle_{(W^{-1,\beta'},W_0^{1,\beta'})_{\frac{1}{r},r'},(W^{-1,\beta},W_0^{1,\beta})_{\frac{1}{r'},r}}\quad\forall \varphi\in\mathcal{D}([0,T[;W_0^{1,\beta'}(\Omega)).
	\end{multline*}
	Therefore, $\tilde{y}=S(u)=y$.
\end{proof}

\subsection{Differentiability of the control-to-state mapping}
We now study the differentiability properties of $S$. First, we state an existence result for a \textit{non-homogeneous linearized equation} with right-hand-side in $L^r(I; W^{-1,\beta}(\Omega))$ and initial condition in $L^\beta(\Omega)$.
\begin{lemma}\label{lem:gen-lin}
	Let Assumptions~\ref{assum:001-b},~\ref{assum-beta}, and~\ref {assum:002} hold and let $y\in C(\bar{I};L^\beta(\Omega))$ be given. Then, for all $\xi\in L^r(I;W^{-1,\beta}(\Omega))$ and $h\in L^\beta(\Omega),$ there exists a unique solution $ \eta\in\mathbb{W}_0^r
	\cap C(\bar{I};L^\beta(\Omega))$ of
	\begin{equation}\label{eq:gen-lin}
		\partial_t\eta+ A\eta+\mathbf{g}'(y)\eta= \xi,\quad \eta(0)=h
	\end{equation}
	and a constant $c>0$ independent of $\xi$, $h$ and $y$, such that 
	\begin{equation*}
		\|\eta\|_{\mathbb{W}^r_0}
		\leq c (\|\xi\|_{L^r(I,W^{-1,\beta}(\Omega))}+\|u\|_{L^\beta(\Omega)}).
	\end{equation*}
\end{lemma}
\begin{proof}
	We start by proving existence of a unique mild solution, similar to Proposition~\ref{prop:y-mild}. We thus define the mapping $\mathcal{J}\colon C(\bar{I};L^\beta(\Omega))\rightarrow C(\bar{I};L^\beta(\Omega))$ by 
	\begin{equation*}
		(\mathcal{J}\eta)(t)=e^{-tA}h+\int_0^t e^{-(t-\tau)A}(\xi(\tau)-\mathbf{g}'(y)(\tau)\eta(\tau))d\tau,\qquad 0\leq t\leq T.	
	\end{equation*}	
	Proposition~\ref{prop:bound_A2} yields
	\begin{equation*}
		\|(\mathcal{J}\eta_1)(t)-(\mathcal{J}\eta_2)(t)\|_{L^{\beta}(\Omega)}\leq c\int_0^t\|\mathbf{g}'(y)(\eta_1-\eta_2)(\tau)\|_{W^{-1,\beta}(\Omega)}~d\tau,
	\end{equation*}
and the embedding $L^\beta(\Omega)\hookrightarrow W^{-1,\beta}(\Omega)$ and the boundedness of $g'$ allow to estimate
\begin{equation*}
	\|(\mathcal{J}\eta_1)(t)-(\mathcal{J}\eta_2)(t)\|_{L^{\beta}(\Omega)}
	\le c\int_0^t\|\eta_1-\eta_2\|_{L^\beta(\Omega)}~d\tau
	\le ct\|\eta_1-\eta_2\|_{C(\bar I; L^\beta(\Omega))}
\end{equation*}
	and, consequently,
	\begin{equation*}
		\|\mathcal{J}^m\eta_1-\mathcal{J}^m\eta_2\|_{C(\bar{I};L^{\beta}(\Omega))}\leq \frac{(cT)^m}{m!}\|\eta_1-\eta_2\|_{C(\bar{I};L^\beta(\Omega))}.
	\end{equation*}
	Since $\frac{(cT)^m}{m!}\to 0$ as $m\to\infty$, the operator $\mathcal{J}^m$ is a contraction. Consequently, according to~\cite[Section 5]{Atkinson2007}, $\mathcal{J}$ has a unique fixed-point $\eta\in C(\bar{I};L^\beta(\Omega))$ such that $\eta(t)=\mathcal{J}(\eta(t))$, which is the mild solution of~\eqref{eq:gen-lin}.
	
	We improve the regularity of the solution by a bootstrapping argument. In the above calculations we have seen that
	$\mathbf{g}'(y)\eta\in 
	L^{r}(I;L^s(\Omega))\hookrightarrow 
	L^r(I;W^{-1,\beta}(\Omega)).$ 
	Then, Theorem~\ref{teo:wp_DA} applied to
	$$ 
	\partial_t\eta+ A\eta=-\mathbf{g}'(y)\eta+ \xi,\quad \eta(0)=h
	$$
	yields the higher regularity of $\eta$. 
	Note that $\mathcal{A}(t)=A+\mathbf{g}'(y(t))\colon W_0^{1,\beta}(\Omega)\to W^{-1,\beta}(\Omega)$ is linear, continuous and closed, and the mapping  $t\ni I\mapsto \mathcal{A}(t)\in \mathcal{L}(W_0^{1,\beta}(\Omega),W^{-1,\beta}(\Omega))$ is measurable and bounded. Since~\eqref{eq:gen-lin} admits a unique solution in $\mathbb{W}_0^r$ we find from~\cite[Prop. 2.1]{amann2005nonautonomous} that $\partial_t+A+\mathbf{g}'(y)\colon \mathbb{W}_0^r\rightarrow L^r(I;W^{-1,\beta}(\Omega))$ is bounded and continuously invertible, which concludes the proof.
\end{proof}
With the last lemma, we have established non-autonomous maximal parabolic regularity of $\mathcal{A}$. Observe that the result of Lemma \ref{lem:gen-lin} is proven for all initial conditions in $L^\beta(\Omega)$ (which embedds into the interpolation space $(W^{-1,\beta}(\Omega),W_0^{1,\beta}(\Omega))_{\frac{1}{r'},r}$), whereas the standard definition of maximal parabolic regularity (Definition~\ref{def:non-aut}) requires the property to hold for all initial conditions in $(W^{-1,\beta}(\Omega),W_0^{1,\beta}(\Omega))_{\frac{1}{r'},r}$. Nevertheless, proving the property for the whole space $L^\beta(\Omega)$ (which includes the case $h=0$ ) is sufficient to conclude maximal parabolic regularity, indeed, this formulation is equivalent to the given definition. This equivalence is established in \cite[Prop.~2.1]{amann2005nonautonomous}.

Based on the last existence result we will now collect and prove norm bounds for specific linearized PDEs whose solutions will in fact correspond to derivatives of the control-to-state mapping $S$. Note again that the control appears in the initial conditions rather than the right-hand-side, which in particular guarantees improved regularity properties of first-order linearizations. 

\begin{corollary}\label{cor:1st-linearized}
	Let Assumptions~\ref{assum:001-b},~\ref{assum-beta} and~\ref{assum:002} hold and $y=S(u)$ be the solution of~\eqref{eq:semi_linear} for some $u\in L^\beta(\Omega)$. Then, for a given $h\in L^{\beta}(\Omega)$, the unique solution $\eta\in \mathbb{W}_0^r
	\cap C(\bar I; L^\beta(\Omega))\cap L^2(I; H_0^1(\Omega))$ of 
	\begin{equation}\label{eq:linearized_sl}
		\partial_t\eta+ A\eta+\mathbf{g}'(y)\eta= 0,\quad \eta(0)=h,
	\end{equation}
	satisfies $\eta\in L^2(I; H_0^1(\Omega))\cap C(\bar I; L^2(\Omega))$
	with estimates 
	\begin{align}\label{eq:semi_est_lin1}	\|\eta\|_{L^2(I;H_0^1(\Omega))}+\|\eta\|_{C(\bar I;L^2(\Omega))}\leq
	c_1\|h\|_{L^{2}(\Omega)},\\\label{eq:semi_est_lin2}
		\|\eta\|_{\mathbb{W}^r_0}\leq c_2\|h\|_{L^{\beta}(\Omega)},\\\label{eq:semi_est_lin3dondl}
		\|\eta\|_{L^{r}(I;C(\bar{\Omega}))}\leq c_3\|h\|_{L^{2}(\Omega)}
	\end{align} for some constants $c_1,c_2,c_3>0$ independent of $h$ and $y$.\end{corollary}

\begin{proof}
	Note that existence and uniqueness of $\eta\in \mathbb{W}_0^r
	$ and estimate~\eqref{eq:semi_est_lin2} are guaranteed by Lemma~\ref{lem:gen-lin}, since $y\in C(\bar I; L^\beta(\bar\Omega))$ due to Proposition~\ref{prop:y-mild}. 
	Estimate~\eqref{eq:semi_est_lin1} is clear, see~\eqref{eq:semi_est_classicalaux}. Estimate~\eqref{eq:semi_est_lin3} follows from~\cite[Theorem 1]{dondl2023}. More precisely, we observe that $\eta$ fulfills
		\begin{equation*}
			\partial_t\eta+ A\eta+\eta=\eta-\mathbf{g}'(y)\eta,\quad \eta(0)=h,
		\end{equation*}
	and the right-hand-side belongs to $L^{r'}(I;L^\infty(\Omega))\hookrightarrow L^{r'}(I; L^2(\Omega))$, by the boundedness of $g$ and $\eta\in\mathbb{W}_0^r\hookrightarrow L^{r'}(I;L^\infty(\Omega))$.  We then obtain an estimate for the right-hand-side, tacitly using boundedness of $g$, by
	$$\|\eta-\mathbf{g}'(y)\eta\|_{L^{r'}(I;L^2(\Omega))}\le \|\eta\|_{L^{r'}(I;L^2(\Omega))}+c\|\eta\|_{L^{r'}(I;L^2(\Omega))}\le c\|h\|_{L^2(\Omega)},$$
	which follows from  $\eta \in C(\bar I; L^2(\Omega))\hookrightarrow L^{r'}(I;L^2(\Omega))$ and estimate~\eqref{eq:semi_est_lin1}. Now, Theorem 1 of~\cite{dondl2023} can be applied with $q:=2>\frac{d}{2}$, $p:=r<2$, and $r:= r'>2$.  
	Note in particular that the notation of $r$ in~\cite{dondl2023} should not be confused with ours. While both describe the time integrability of the right-hand-side, the result in~\cite{dondl2023} requires this to be greater or equal to 2. We can guarantee this for the particular linearized equation where the time-integrability index is in fact $r'>2$ (using our definition of $r$ and $r'$).
	Now, $$\|\eta\|_{L^{r}(I;C(\bar \Omega))}\le c(\|\eta-\mathbf{g}'(y)\eta\|_{L^{r'}(I;L^2(\Omega))}+\|h\|_{L^2(\Omega)})\le c\|h\|_{L^2(\Omega)}$$
	follows from~\cite[Theorem 1]{dondl2023}.
\end{proof}
The following result holds for homogeneous initial condition, and right-hand-sides that solve first order linearized PDEs. This is precisely the structure expected of $S''$, for control in the initial condition.
\begin{corollary}\label{cor:2nd-linearized}Let Assumptions~\ref{assum:001-b},~\ref{assum-beta}, and~\ref {assum:002} hold,  and let $y\in\mathbb{W}^r_0$ be given. For $h\in L^\beta(\Omega)$, denote by $\eta$ the solution of~\eqref{eq:linearized_sl} from  Corollary~\ref{cor:1st-linearized}. The equation
	\begin{equation}\label{eq:2nd-linearized}
		\partial_t\omega+ A\omega+\mathbf{g}'(y)\omega= -\mathbf{g}''(y)\eta^2,\quad \omega(0)=0,
	\end{equation}
	has a unique solution $\omega\in\mathbb{W}_0^r
	\cap  L^2(I; H_0^1(\Omega))\cap C(\bar I; L^\beta(\Omega)),$
	with $c_1,c_2>0$ such that
		\begin{align}\label{eq:semi_est_lin3}
			\|\omega\|_{\mathbb{W}^r_0}&\leq c_1\|h\|^2_{L^{\beta}(\Omega)},\\\label{eq:semi_est_lin4}
			\|\omega\|_{L^2(I;H_0^1(\Omega))}+\|\omega\|_{C(\bar I; L^2(\Omega))}&\leq c_2\|h\|^2_{L^{2}(\Omega)}.
		\end{align}
		Moreover, for $d=2$, there is $c_3>0$ such that we have the improved estimate
	\begin{align}\label{eq:semi_est_lin5}
		\|\omega\|_{\mathbb{W}^r_0}\leq c_3\|h\|^2_{L^{2}(\Omega)}.
	\end{align}
\end{corollary}
\begin{proof}
	To discuss~\eqref{eq:2nd-linearized}, 
	note that its initial condition is zero, but we need to prove $\mathbf{g}''(y)\eta^2\in L^r(I;W^{-1,\beta}(\Omega))$  to apply Lemma~\ref{lem:gen-lin}. In fact,  the boundedness of $g''$, the embedding results~\eqref{eq:embeddings} as well as~\eqref{wrembeddings} and the generalized H\"older inequality yield
	\begin{align*}
		&\| \mathbf{g}''(y)\eta^2\|_{L^r(I;W^{-1,\beta}(\Omega))}\leq c\| \mathbf{g}''(y)\eta^2\|_{L^r(I;L^s(\Omega))}\le c\|\eta\|_{L^q(I;L^\beta(\Omega))}\|\eta\|_{L^{\tilde q}(I;L^2(\Omega))}.
	\end{align*}
	This implies 
	\[
	\| \mathbf{g}''(y)\eta^2\|_{L^r(I;W^{-1,\beta}(\Omega))}\leq c\|\eta\|^2_{\mathbb{W}_0^r} \le c\|h\|^2_{L^\beta(\Omega)}. 
	\]
	From this, we easily obtain existence and uniqueness of $\omega\in \mathbb{W}_0^r$ 
	and estimate~\eqref{eq:semi_est_lin3}. 
	To obtain~\eqref{eq:semi_est_lin4} we recall~\eqref{eq:classicalnormlin2} and therefore estimate 
	\begin{equation}\label{eq:G2-aux}
		\|\mathbf{g}''(y)\eta^2\|_{L^1(I; L^2(\Omega))}\le c\|\eta\|_{L^2(I; L^4(\Omega))}\|\eta\|_{L^2(I; L^4(\Omega))}\le c\|\eta\|_{L^2(I; H_0^1(\Omega))}^2\le c\|h\|^2_{L^2(\Omega)}
	\end{equation}
	due to~\eqref{eq:semi_est_lin1}.
For~\eqref{eq:semi_est_lin5}, estimate
	\begin{align}\label{eq:auxomega}
		\|\mathbf{g}''(y)\eta^2\|_{ L^r(I;W^{-1,\beta}(\Omega))}&\leq c\|\eta^2\|_{ L^r(I;L^s(\Omega))}\nonumber\le c\|\eta\|_{L^\infty(I; L^2(\Omega))}\|\eta\|_{L^2(I; L^{{\tilde s}}(\Omega))}\leq c \|h\|^2_{L^2(\Omega)},
	\end{align}
	tacitly using the boundedness of $\mathbf{g}''$, the embeddings~\eqref{wrembeddings} and $H_0^1(\Omega)\hookrightarrow L^{{\tilde s}}(\Omega)$ for $d=2$, the fact that $r<2$, and Corollary~\ref{cor:1st-linearized}. The proof can then be concluded with
	Lemma~\ref{lem:gen-lin}. 
\end{proof}
	\begin{remark} \label{rem:important}
	Note that estimate~\eqref{eq:semi_est_lin5} will be vital for the proof of second order optimality conditions in Section~\ref{sec:ssc}, together with the Lipschitz estimate~\eqref{eq:AppB-32d} that we will prove in Lemma~\ref{lem:lip_eta}. Similar (Lipschitz) estimates have been used for a parabolic distributed control problem with semilinear PDE in~\cite{neitzelvexler}. There, the use of the embedding $H_0^1(\Omega)\hookrightarrow L^4(\Omega)$ allowed to use the $L^2$-norm of the control, rather than its $L^\infty$-norm, to be able to consider local optimal solutions in the sense of $L^2$ even though the control-to-state mapping was not differentiable with respect to $L^2$.
\end{remark}

\begin{theorem}\label{teo:2nd_derivative_S}Let Assumptions~\ref{assum:001-b},~\ref{assum-beta}, and~\ref {assum:002} hold.
	The control-to-state mapping $S$ is twice Fr\'echet-differentiable from $L^\beta(\Omega)$ to $\mathbb{W}_0^r$,
	with $y=S(u)$, $h\in L^\beta(\Omega)$, $\eta=S'(u)h$ and $\omega=S''(u)h^2$ being  the unique solutions of the linearized equation~\eqref{eq:linearized_sl} and the second-order linearized equation~\eqref{eq:2nd-linearized}, respectively. 
\end{theorem}

\begin{proof}
	Given our collected regularity results for state and linearized state equations, the proof now follows by standard procedure. We refer to a similar setting, with higher overall regularity but control in the right-hand-side in~\cite{krumbiegel2013second}, or classical arguments provided in e.g. the monograph~\cite{Tro}.
	Let $h\in L^\beta(\Omega)$ be given and consider $y_{h}:=S(u+h)$, $y:=S(u)$, and $y_\rho :=y_h -y -\eta$, where $\eta$ denotes the solution of~\eqref{eq:linearized_sl} in Corollary~\ref{cor:1st-linearized}. Then, $y_\rho$ solves
	\begin{equation}\label{eq:dif-S}
	\partial_t y_\rho+Ay_\rho+ \mathbf{g}(y_h)-\mathbf{g}(y)-\mathbf{g}'(y)\eta
		=0,\qquad y_\rho(0)=0.
	\end{equation}
	The differentiability properties of $\mathbf{g}$ from Proposition~\ref{prop:Dif_Nemytskii} guarantee
	\[
	\mathbf{g}(y_h)-\mathbf{g}(y)-\mathbf{g}'(y)\eta=\mathbf{g}'(y)y_\rho + r_g,
	\]
	where $r_g\in L^r(I;W^{-1,\beta}(\Omega))$ is a remainder term of order 
	${o}(\|y_h-y\|_{L^q(I;L^\beta(\Omega))})$.
	From the latter, we can write equation\eqref{eq:dif-S} in the following form:
	\begin{equation*}
		\partial_t y_\rho+Ay_\rho + \mathbf{g}'(y)y_\rho=-r_g,\qquad y_\rho(0)=0,
	\end{equation*}
	and Lemma~\ref{lem:gen-lin} guarantees
	\begin{equation}\label{eq:aux}
		\|y_\rho\|_{\mathbb{W}_0^r}\leq c\|r_g\|_{L^r(I;W^{-1,\beta}(\Omega))}={o}(\|y_h-y\|_{L^q(I;L^\beta(\Omega))}).
	\end{equation}
	Then, we observe
	$$\frac{\|y_\rho\|_{\mathbb{W}_0^r}}{\|h\|_{L^\beta(\Omega)}}\le \frac{\|y_\rho\|_{\mathbb{W}_0^r}}{\|y_h-y\|_{L^q(I;L^\beta(\Omega))}}\frac{\|y_h-y\|_{L^q(I;L^\beta(\Omega))}}{\|h\|_{L^\beta(\Omega)}}\le c\frac{\|y_\rho\|_{\mathbb{W}_0^r}}{\|y_h-y\|_{L^q(I;L^\beta(\Omega))}}$$
	by the Lipschitz continuity of $S$ from Lemma~\ref{lem:002}, cf. estimate~\eqref{eq:Slipschitz}. Due to the same Lemma, we obtain $\|y_h-y\|_{L^q(I;L^\beta(\Omega))}\to 0$ for $\|h\|_{L^\beta(\Omega)}\to 0$, hence~\eqref{eq:aux} yields the remainder term property for first order Fr{\'e}chet differentiability.
	
	Now, set $y_\omega:=y_{h}-y-\eta-\omega$ and observe that it solves
	\begin{equation}\label{eq:dif2-S}
		\partial_t y_\omega+Ay_\omega= -(\mathbf{g}(y_h)-\mathbf{g}(y)-\mathbf{g}'(y)\eta-\mathbf{g}'(y)\omega -\frac12\mathbf{g}''(y)\eta^2),
		\quad y_\omega(0)=0.
	\end{equation} 
	Due to Proposition ~\ref{prop:Dif_Nemytskii} we can write equation~\eqref{eq:dif2-S} as
	\[
	\partial_t y_\omega+Ay_\omega+\mathbf{g}'(y)y_\omega=-r_g^2-\frac12\mathbf{g}''(y)[(y_h-y)^2-\eta^2],
	\]
	with remainder term $r_g^2\in L^r(I;W^{-1,\beta})$ satisfying
	\begin{equation}\label{eq:rg2}
		\|r_g^2(y,y_h-y)\|_{L^r(I;W^{-1,\beta})}={o}(\|y_h-y\|^2_{L^q(I;L^\beta(\Omega))}).
	\end{equation}
	From Corollary~\ref{cor:1st-linearized}, we obtain the estimate
	\begin{equation*}
		\|y_\omega\|_{\mathbb{W}_0^r}\leq c\left(\|r_g^2\|_{L^r(I;W^{-1,\beta}(\Omega))} + \left\|\mathbf{g}''(y)[(y_h-y)^2-\eta^2]\right\|_{L^r(I;W^{-1,\beta}(\Omega))}\right).
	\end{equation*}
	Therefore it remains to show that each term on the right hand side is of order ${o}(\|h\|^2_{L^\beta(\Omega)})$. In fact, in view of~\eqref{eq:rg2} and the Lipschitz result for $S$ from Lemma~\ref{lem:002}, this is true for the first term. For the second term, we can apply the same steps as in the proof of Corollary~\ref{cor:2nd-linearized}
	to obtain
	$$\| \mathbf{g}''(y)\left[(y_h-y)^2 -\eta^2\right]\|_{L^r(I;W^{-1,\beta}(\Omega))}\le c\|h\|_{L^\beta(\Omega)}^2,$$
	and eventually conclude the proof by the same arguments as for first order differentiability.
\end{proof}

Next, we state a Lipschitz continuity result for $S'$. 

\begin{lemma}\label{lem:lip_eta} Let Assumptions~\ref{assum:001-b},~\ref{assum-beta}, and~\ref {assum:002} hold. There exist  positive constants $c_1$ and $c_2$ 
	such that
	\begin{equation}\label{eq:AppB-3}
		\|\eta_1-\eta_2\|_{\mathbb{W}_0^r}\leq c_1\|u_1-u_2\|_{L^\beta(\Omega)}\|h\|_{L^2(\Omega)},
	\end{equation}
	\begin{equation}\label{eq:AppB-3b}
		\|\eta_1-\eta_2\|_{L^2(I; H_0^1(\Omega))}+\|\eta_1-\eta_2\|_{C(\bar I; L^2(\Omega))}\leq c_2\|u_1-u_2\|_{L^2(\Omega)}\|h\|_{L^2(\Omega)},
	\end{equation}
for all $u_1,u_2\in U_{\text{ad}}$ and $h\in L^\beta(\Omega)$, where $y_1=S(u_1)$, $y_2=S({u_2})$ and $\eta_1=S'(u_1)h$, ${\eta_2}=S'({u}_2)h$ denote the associated states and linearized states.
	Moreover, if $d=2$, then there is a constant $c_3>0$ such that
	\begin{equation}\label{eq:AppB-32d}
		\|\eta_1-\eta_2\|_{\mathbb{W}_0^r}\leq {c}_3\|u_1-u_2\|_{L^2(\Omega)}\|h\|_{L^2(\Omega)},
	\end{equation}
for all $u_1,u_2\in U_{\text{ad}}$ and $h\in L^\beta(\Omega)$, with $y_1,y_2$ and $\eta_1,\eta_2$ defined as above.

\end{lemma}

\begin{proof}
	We see after straight forward calculations that the difference $\hat\eta:=\eta_1- \eta_2$ fulfills
	\begin{equation*}
		\partial_t\hat\eta+A\hat\eta+\mathbf{g}'(y)\hat\eta =(\mathbf{g}'(y_1) - \mathbf{g}'(y_2))\eta_2,\quad
		\hat\eta(0)=0,
	\end{equation*}
	Proceeding as in the proof of Corollary~\ref{cor:1st-linearized} 
	we obtain, by applying H\"older's inequality,
	\begin{align*}
		\|\hat \eta\|_{\mathbb{W}_0^r}&\le c \|(\mathbf{g}'(y_1)-\mathbf{g}'(y_2))\eta_2\|_{L^r(I;L^s(\Omega))}\le c\|\mathbf{g}'(y_1)-\mathbf{g}'(y_2)\|_{L^q(I;L^{\beta}(\Omega))} \|\eta_2\|_{L^{\tilde q}(I;L^2(\Omega))}\\
		&\leq c\|y_1-y_2\|_{L^q(I;L^\beta(\Omega))}\|h\|_{L^2(\Omega)},
	\end{align*}
	or, in case $d=2$ where $H_0^1(\Omega)\hookrightarrow L^{{\tilde s}}(\Omega)$,
	\begin{align*}
		\|\hat \eta\|_{\mathbb{W}_0^r}&\le c \|(\mathbf{g}'(y_1)-\mathbf{g}'(y_2))\eta_2\|_{L^r(I;L^s(\Omega))}\le c\|\mathbf{g}'(y_1)-\mathbf{g}'(y_2)\|_{L^2(I;L^{\tilde s}(\Omega))} \|\eta_2\|_{L^\infty(I;L^2(\Omega))}\\
		&\leq c\|y_1-y_2\|_{L^2(I;H_0^1(\Omega))}\|h\|_{L^2(\Omega)}.
	\end{align*}
	The last inequality holds using the Lipschitz continuity of $\mathbf{g}'$, cf. Remark~\ref{rem:nemlip}. 
	The Lipschitz continuity results~\eqref{eq:Slipschitz:classic} and~\eqref{eq:Slipschitz} stated in Lemma~\ref{lem:002} yield estimates~\eqref{eq:AppB-3} and~\eqref{eq:AppB-32d}. To prove~\eqref{eq:AppB-3b}, recall again estimate~\eqref{eq:classicalnormlin2}. We proceed similarly to estimate~\eqref{eq:G2-aux} in the proof of Corollary~\ref{cor:2nd-linearized} and obtain
	\begin{multline*}
		\|(\mathbf{g}'(y_1)-\mathbf{g}'(y_2))\eta_2\|_{L^1(I;L^2(\Omega))}\le c \|\mathbf{g}'(y_1)-\mathbf{g}'(y_2)\|_{L^2(I;L^4(\Omega))}\|\eta_2\|_{L^2(I;L^4(\Omega))}\\\le c\|y_1-y_2\|_{L^2(I;L^4(\Omega))}\|\eta_2\|_{L^2(I;L^4(\Omega))}\le c\|y_1-y_2\|_{L^2(I;H_0^1(\Omega))}\|\eta_2\|_{L^2(I;H_0^1(\Omega))}\\\le c\|u_1-u_2\|_{L^2(\Omega)}\|h\|_{L^2(\Omega)},\end{multline*} 
	due to the Lipschitz continuity of $\mathbf{g}'$, the embedding $H_0^1(\Omega)\hookrightarrow L^4(\Omega)$ for $d=2,3$, as well as estimates~\eqref{eq:Slipschitz:classic} and~\eqref{eq:semi_est_lin1}.
\end{proof}

\begin{remark}
	As indicated in Remark~\ref{rem:important}, we have now obtained a Lipschitz result for $S'$ where the norm $\|u_1-u_2\|_{L^\beta(\Omega)}$ can be replaced by the $L^2$-norm at least in the case $d=2$; cf.~\cite[Lemma 2.3]{neitzelvexler} for similar calculations. However, unlike in~\cite[Lemma 2.3]{neitzelvexler}, we cannot achieve such an improved Lipschitz estimate for the control to state operator $S$, cf.~Lemma~\ref{lem:002}, since our control acts in the initial condition rather than the right-hand-side. Thus, we will also not be able to circumvent the two-norm-discrepancy in our result on second order sufficient optimality conditions, Theorem~\ref{thm:ssc}.
\end{remark}

\subsection{Existence of an optimal control}
Making use of the solution operator $S$, we can now proceed as in the convex setting and rewrite the cost functional, and thus the DA problem, in the following way:
\begin{equation}\label{eq:reduced_sl}
	\min_{u\in U_{\text{ad}}} f(u)=\dfrac{1}{2}\displaystyle\iint_Q\sum_{k}[S(u)-z_o]^2\otimes\delta_{x_k}~dxdt+ \frac{1}{2}\|u-u_b\|_{B^{-1}}^2.
\end{equation}

\begin{theorem}
	Let Assumptions~\ref{assum:001-b},~\ref{assum-beta},~\ref{assum:002} and ~\ref{optcontrol} hold. Then, the DA problem ~\eqref{eq:reduced_sl} has at least one optimal solution $\bar{u}\in U_{\text{ad}}$ with associated optimal state $\bar{y}=S(\bar{u})\in\mathbb{W}^r_0$.
\end{theorem}
\begin{proof}
	With Lemma~\ref{lem:003}  the proof follows standard procedure, see e.g.,~\cite[Theorem 5.7]{Tro}, in a slightly different analytic framework. We omit the details. 
\end{proof}

\subsection{First order optimality conditions}
We now derive a first-order necessary optimality system for the nonconvex DA problem. Let us start by studying the adjoint equation:
\begin{equation}\label{eq:adjoint_sl_r}
	\displaystyle-\partial_t p+A^*p +[\mathbf{g}'(y)]^*p=\nu,\quad
	p(T)=0,
\end{equation}
for $\nu\in L^{r'}(I;W^{-1,\beta'}(\Omega))$. Note again that in the semilinear case, the operator $\mathcal{A}=A+\mathbf{g}'(y)$ is non-autonomous, so maximal parabolic regularity of its adjoint does not follow directly from e.g.~\cite{herzog2017existence} as in the autonomous setting.
To obtain an existence and regularity result for~\eqref{eq:adjoint_sl_r} we follow techniques from e.g. 
\cite[Sec. 6.2 - 6.3]{homberg2010optimal}. 

\begin{lemma}\label{lem:adjoint_r}Let Assumptions~\ref{assum:001-b},~\ref{assum-beta}, and~\ref {assum:002} hold and let $\nu\in L^{r'}(I;W^{-1,\beta'}(\Omega))$. The adjoint equation~\eqref{eq:adjoint_sl_r} admits a unique solution $p\in\mathbb{W}_0^{r'}.$ 
	There exists  $c>0$ such that
	\begin{equation*}
		\|p\|_{\mathbb{W}_0^{r'}}\leq c\|\nu\|_{L^{r'}(I;W^{-1,\beta'}(\Omega))}.
	\end{equation*}
\end{lemma}

\begin{proof}
	From Lemma~\ref{lem:gen-lin} we already know that the linear operator $\partial_t+A+\mathbf{g}'(y)\colon \mathbb{W}_0^r\rightarrow L^r(I;W^{-1,\beta}(\Omega))$ is bounded and continuously invertible. Therefore, its adjoint
	\[
	(\partial_t+A+\mathbf{g}'(y))^*\colon  L^{r'}(I;W_0^{1,\beta'}(\Omega)) \rightarrow
	(\mathbb{W}_0^r)^*
	\]
	is continuously invertible as well. Observe that $L^{r'}(I;W^{-1,\beta'}(\Omega))\hookrightarrow (\mathbb{W}_0^r)^*$ since $\mathbb{W}_0^r\hookrightarrow L^r(I;W_0^{1,\beta}(\Omega))$ holds.  Consequently, for all $\nu\in L^{r'}(I;W^{-1,\beta'}(\Omega))\hookrightarrow (\mathbb{W}_0^r)^*$ 
	 there is a unique solution $p\in L^{r'}(I;W_0^{1,\beta'}(\Omega))$ of 
	\begin{equation}\label{eq:adj-aux1}
		(\partial_t+A+\mathbf{g}'(y))^*p=\nu, 
		\quad p(T)=0.
	\end{equation}

	We now make use of the higher regularity $\nu\in L^{r'}(I;W^{-1,\beta'}(\Omega))$  
	 and prove differentiability of the solution $p$ with respect to time. For that, we test~\eqref{eq:adj-aux1} with $\zeta\in C_0^{\infty}(I;W_0^{1,\beta}(\Omega))$ 
	 satisfying $\zeta(t)=\varphi(t)\chi$, $\varphi\in C_0^\infty(I)$, $\chi\in W_0^{1,\beta}(\Omega)$. We obtain
\begin{align*}
	&\int_0^T\langle \nu,\zeta\rangle_{W^{-1,\beta'},W_0^{1,\beta}}~dt=\int_0^T\varphi\langle \nu,\chi\rangle_{W^{-1,\beta'},W_0^{1,\beta}}~dt\\
	&=\int_0^T\varphi\langle (\partial_t+A+\mathbf{g}'(y))^*p,\chi\rangle_{W^{-1,\beta'},W_0^{1,\beta}}~dt= \int_0^T\langle p,(\partial_t+A+\mathbf{g}'(y))\varphi\chi\rangle_{W_0^{1,\beta'},W^{-1,\beta}}~dt\\
	&=\int_0^T\varphi'\langle p,\chi\rangle_{W_0^{1,\beta'},W^{-1,\beta}}~dt+\int_0^T\varphi\langle A^*p + [\mathbf{g}'(y)]^*p,\chi\rangle_{W^{-1,\beta'}(\Omega),W_0^{1,\beta}(\Omega)}~dt,
\end{align*}
	and, consequently,
	\begin{align*}
	&\int_0^T \varphi'\langle p, \chi\rangle_{W_0^{1,\beta'},W^{-1,\beta}}~dt=\int_0^T\varphi\langle \nu-A^*p-[\mathbf{g}'(y)]^*p, \chi\rangle_{W^{-1,\beta'},W_0^{1,\beta}}~dt\\
	&=\int_0^T\langle\varphi (\nu-A^*p-[\mathbf{g}'(y)]^*p),\chi\rangle_{W^{-1,\beta'},W_0^{1,\beta}}~dt,
\end{align*}
	where the last equality holds since 
	$\psi$ does not depend on $t$. Moreover, 
	$W_0^{1,\beta'}(\Omega)\hookrightarrow W^{-1,\beta'}(\Omega)$. Therefore, identifying $p\in W_0^{1,\beta'}(\Omega)$ with an element of $W^{-1,\beta'}(\Omega)$, we can rewrite the identity above as:
	\begin{equation*}
	\int_0^T\left\langle\varphi' p, \chi\right\rangle_{W^{-1,\beta'},W_0^{1,\beta}}~dt=\left\langle \int_0^T\varphi(\nu-A^*p-[\mathbf{g}'(y)]^*p)~dt,\chi\right\rangle_{W^{-1,\beta'},W_0^{1,\beta}}.
\end{equation*}
	Additionally, since $\chi\in W_0^{1,\beta}(\Omega)$  
	was taken arbitrarily, it holds that
		\[
	\int_0^T \varphi' p ~dt= \int_0^T\varphi(\nu-A^*p-[\mathbf{g}'(y)]^*p)~dt,\quad\text{ in } W^{-1,\beta'}(\Omega),~\forall \varphi\in C_0^\infty(I).
	\]
	Therefore, the distributional time derivative of $p$ is given by $\nu -  A^*p -[\mathbf{g}'(y)]^*p\in L^{r'}(I;W^{-1,\beta'}(\Omega))$, that is, 
	\begin{equation}\label{eq:adj-aux2}
	-\partial_t p = \nu - A^*p - [\mathbf{g}'(y)]^*p\qquad \text{ in }W^{-1,\beta'}(\Omega), \text{ a.e. } t\in I,
\end{equation}
	which shows that $p\in W^{1,r'}(I;W^{-1,\beta'}(\Omega))$. Therefore, $p\in \mathbb{W}^{r'}(W_0^{1,\beta'}(\Omega),W^{-1,\beta'}(\Omega))$. It remains to show that $p(T)=0$. 
	We apply Green's formula,~\cite[Proposition 5.1]{amann2005nonautonomous} to~\eqref{eq:adj-aux1}, with arbitrary $z\in \mathbb{W}_0^r$. This leads to
\begin{align*}
	0&=\int_0^T\langle \nu-(\partial_t + A + \mathbf{g}'(y))^*p,z \rangle_{W^{-1,\beta'},W_0^{1,\beta}}~dt + \langle p(T),z(T) \rangle_{(W^{-1,\beta'},W_0^{1,\beta'})_{\frac1r,r'},(W^{-1,\beta},W_0^{1,\beta})_{\frac{1}{r'},r}}\\
	&=\langle p(T),z(T) \rangle_{(W^{-1,\beta'},W_0^{1,\beta'})_{\frac1r,r'},(W^{-1,\beta},W_0^{1,\beta})_{\frac{1}{r'},r}},
\end{align*}
	where we used~\eqref{eq:adj-aux2} in the last equation. 
	Consequently, $A^*+[\mathbf{g}'(y)]^*$ satisfies maximal parabolic $L^{r'}(I;W^{-1,\beta'}(\Omega))$-regularity, and the estimate holds.
\end{proof}
The adjoint state satisfies useful Lipschitz estimates as well.

\begin{lemma}\label{lem:est_adj}Let Assumptions~\ref{assum:001-b},~\ref{assum-beta}, and~\ref {assum:002} hold.
	There is a constant $c>0$ such that 
	\begin{equation*}
		\|p_1-p_2\|_{\mathbb{W}_0^{r'}}\leq c\|u_1-u_2\|_{L^\beta(\Omega)}
	\end{equation*}
	holds for all $u_1,u_2\in U_{\text{ad}}$ and all $p_1,p_2$ being adjoint states associated with $y_1:=S(u_1)$ and $y_2:=S(u_2)$, respectively.
\end{lemma}
\begin{proof}
	Observe that the difference $\hat p:=p_1-p_2\in \mathbb{W}_0^{r'}$ satisfies
	\begin{equation*}
		\displaystyle
		-\partial_t\hat p+A^*\hat p +[\mathbf{g}'(y_1)]^*\hat p= \displaystyle\sum_{k}\left[y_1(x_k,\cdot)-y_2(x_k,\cdot)\right]\delta_{x_k}- \left(\mathbf{g}'(y_2)-\mathbf{g}'(y_1)\right)p_2,\quad
		\hat p(T)=0.
	\end{equation*}
	It is easy to verify that the right-hand side of the above equation belongs to $L^{r'}(I;W^{-1,\beta'}(\Omega))$, see in particular the proof of Lemma~\ref{lem:A1}. Then, Lemma~\ref{lem:adjoint_r},
	the Lipschitz results for $S$ from~\eqref{eq:Slipschitz} and $\mathbf{g}'$, see Remark~\ref{rem:nemlip}, and  H\"older's inequality with $\frac{1}{\beta'} =\frac12+\frac{1}{\tilde\beta'},\tilde \beta'=\frac{2\beta}{\beta-2},$
	as well as embedding results lead to
	\begin{align*}
		\|p\|_{\mathbb{W}_0^{r'}}&\leq c(\|y_1-y_2\|_{L^{r'}(I;C(\bar{\Omega}))} + \|\left(\mathbf{g}'(y_2)-\mathbf{g}'(y_1)\right)p_2\|_{L^{r'}(I;L^{\beta'}(\Omega))})\\ &\le c(\|u_1-u_2\|_{L^\beta(\Omega)}+\|y_1-y_2\|_{L^{r'}(I;L^2(\Omega))}\|p_2\|_{L^{r'}(I;L^{\tilde\beta'}(\Omega))})\\
		&\leq c(\|u_1-u_2\|_{L^\beta(\Omega)}+ \|u_1-u_2\|_{L^\beta(\Omega)}\|p_2\|_{\mathbb{W}_0^{r'}}.)
	\end{align*}
	The result holds due to Lemma~\ref{lem:adjoint_r} since all admissible $u$ are bounded in $L^\beta(\Omega)$. 
\end{proof}
As in the linear case, we now rewrite 
problem~\eqref{eq:reduced_sl} as 
\begin{equation}\label{eq:reduced_sl2}
	\min f(u)=J(S(u),u),\quad u\in L^{\beta}(\Omega) \quad\text{and}\quad \psi(u)\in K,
\end{equation}
with $K$ and $\psi$ as in~\eqref{eq:reg_cond}, and discuss its optimality conditions. We start with a first order necessary optimality system.

\begin{theorem}\label{teo:007} 	Let Assumptions~\ref{assum:001-b},~\ref{assum-beta},~\ref{assum:002} and ~\ref{optcontrol} hold 
	and let $\bar{u}\in L^{\beta}(\Omega)$ be a local solution of~\eqref{eq:reduced_sl2}, with associstated state $\bar{y}=S(\bar u)$. 
	 There is a unique adjoint state $\bar{p}\in\mathbb{W}^{r'}_0$ and a unique multiplier $\bar{\lambda}\in\mathbb{R}$ such that:
		\begin{align}\label{eq:adjoin_nl}
			\displaystyle-\partial_t \bar{p}+A^*\bar{p} +[\mathbf{g}'(\bar{y})]^*\bar{p}=  \sum_{k}\left[ S\bar{u}(x_k,\cdot)-z_o(x_k,\cdot)\right]\delta_{x_k},\quad
			\bar{p}(T)=0,\\
\label{eq:gradient_nl}
			\bar{p}(0)+B^{-1}(\bar{u}-u_b)+\bar{\lambda}\beta|\bar{u}|^{\beta-2}\bar{u}=0\text{ in }\Omega,\\
\label{eq:complementarity_nl}
			\bar{\lambda}\geq0,\quad b-\int_\Omega |\bar{u}|^\beta~dx \geq0,\quad \bar{\lambda}\left( b-\int_\Omega |\bar{u}|^\beta~dx\right)=0.
		\end{align}
\end{theorem}

\begin{proof}
	Proceeding as in the linear case, existence, uniqueness and regularity of $\bar p\in \mathbb{W}_0^{r'}$ is clear due to Lemma~\ref{lem:adjoint_r}, and using the result of Zowe and Kurcyusz~\cite[pp.50]{zowe1979regularity}, we obtain the existence of a Lagrange multiplier $\bar{\lambda}\geq0$ that satisfies the complementarity system~\eqref{eq:complementarity_nl} as well as the equation:
	\begin{equation}\label{eq:os_sl_1}
		\langle f'(\bar{u})-\bar{\lambda}\psi'(\bar{u}),h\rangle_{L^{\beta'}(\Omega),L^\beta(\Omega)}=0, \qquad\forall h\in L^\beta(\Omega).
	\end{equation}
	For any direction $h\in L^{\beta}(\Omega)$, we see, with $\eta=S'(\bar{u})h\in\mathbb{W}^r_0\hookrightarrow L^r(I;W_0^{1,\beta}(\Omega))$, that 
\begin{align*}
	f'(\bar{u})h
	&=\displaystyle\int_0^T  \langle \eta,(-\partial_t+A^*+[\mathbf{g}'(\bar{y})]^*)\bar{p}\rangle_{W_0^{1,\beta},W^{-1,\beta'}}~dt+\int_{\Omega}\left (\bar{u}-u_b\right)B^{-1}h~dx.
\end{align*}
	Applying Green's identity to the latter, considering the linearized equation~\eqref{eq:linearized_sl}, and combining this with 
	\eqref{eq:os_sl_1} eventually leads to
	\[
	\displaystyle\int_{\Omega}\left(\bar{p}(0)+(\bar{u}-u_b)B^{-1}\right)h~dx +\bar{\lambda}\beta\int_{\Omega}|\bar{u}|^{\beta -2}\bar{u} h~dx=0 ,\quad\forall h\in L^{\beta}(\Omega),
	\]
	which implies~\eqref{eq:gradient_nl}. We omit the details, that have been shown in the convex setting. Additionally, we can verify the uniqueness of the Lagrange multiplier $\bar{\lambda}\in\mathbb{R}$ for each fixed control analogous to the convex setting.
\end{proof}
\subsection{Second order sufficient optimality conditions}
\label{sec:ssc}
In preparation for our result on second-order sufficient conditions, we take a look at the second derivative of the cost functional, which, for $d=2$, exists due to the chain rule and the results from Theorem~\ref{teo:2nd_derivative_S}. Therefore, let us consider the two-dimensional case in this section and fix $$d=2.$$

For each $u\in L^\beta(\Omega)$ and each direction $h\in L^\beta(\Omega)$, straight forward calculations lead to
\begin{multline*}
f''(u)h^2=
	\int_0^T\langle\omega, \sum_{k} (y(x_k,\cdot)-z_o(x_k,\cdot))\delta_{x_k}\rangle_{W_0^{1,\beta},W^{-1,\beta'}}~dt\\+\int_0^T\langle \eta,\sum_{k}\eta(x_k,\cdot) \delta_{x_k}\rangle_{W_0^{1,\beta},W^{-1,\beta'}}~dt+\|h\|^2_{B^{-1}},
\end{multline*}
with $y:=S(u)$, $\eta:=S'(u)h$, and $\omega:=S''(u)h^2$.
We insert~\eqref{eq:adjoin_nl} and obtain
\begin{equation*}
	f''(u)h^2=
\int_0^T\langle \omega, (\partial_t+A+\mathbf{g}'(y))^*p\rangle_{W_0^{1,\beta},W^{-1,\beta'}}~dt+\int_0^T\langle \eta,\sum_{k}\eta(x_k,\cdot) \delta_{x_k}\rangle_{W_0^{1,\beta},W^{-1,\beta'}}~dt+\|h\|^2_{B^{-1}}.
\end{equation*}
Note that $\omega\in \mathbb{W}_0^r\hookrightarrow L^r(I; W_0^{1,\beta}(\Omega))$. The discussion now follows along the lines of Theorem~\ref{teo:007}. 
Proceeding as in the proof Corollary~\ref{cor:2nd-linearized} and using~\eqref{eq:2nd-linearized}, it follows that $(\partial_t+A+\mathbf{g}'(y))\omega=-\mathbf{g}''(y)\eta^2\in L^{r}(I;L^s(\Omega))\hookrightarrow L^{r}(I; W^{-1,\beta}(\Omega))$. Moreover, since $p\in L^{r'}(I;W_0^{1,\beta'}(\Omega))$
we can integrate by parts, 
observe $p(T)=\omega(0)=0$, and obtain:
\begin{equation*}
	f''(u)h^2=
	\int_0^T\langle\eta,\sum_{k} \eta(x_k,\cdot)\delta_{ x_k}\rangle_{W_0^{1,\beta},W^{-1,\beta'}}~dt-\int_0^T\langle \mathbf{g}''(y)\eta^2,p\rangle_{W^{-1,\beta},W_0^{1,\beta'}}~dt+\|h\|^2_{B^{-1}}.
\end{equation*}
With this at hand, we will now prove a Lipschitz continuity result for $f''$.
\begin{lemma}\label{lem:flip}Let Assumptions~\ref{assum:001-b},~\ref{assum-beta}, and~\ref {assum:002} hold  and let $d=2$. 
	There exists a positive constant $L$ such that
	\[|(f''(u_1)-f''(u_2))h^2|\leq L\|u_1-u_2\|_{L^{\beta}(\Omega)}\|h\|_{L^2(\Omega)}^2
	\]
	 holds for all $u_1, u_2\in U_{\text{ad}}$ and all $h\in L^{\beta}(\Omega).$
\end{lemma}
\begin{proof}
	Let $y_1:=S(u_1)$, $y_2:=S(u_2)$, $\eta_1:=S'(u_1)h$, $\eta_2 :=S'(u_2)h$ denote the associated states and linearized states, respectively, with associated adjoint state $p_1,p_2$, respectively. The assertion follows by straightforward calculations, applying the Lipschitz results for the state, linearized state, and adjoint state from Lemma~\ref{lem:002},~\ref{lem:lip_eta}, and Lemma~\ref{lem:est_adj}. We point out similar calculations in e.g.~\cite{krumbiegel2013second} for a higher overall regularity setting and different objective function. Observe
	\begin{multline*}
		|(f''(u_1)-f''(u_2))h^2|\leq
\overbrace{\left\vert\int_0^T\langle \eta_1+\eta_2,\sum_{k}(\eta_1-\eta_2)(x_k,\cdot)\otimes\delta_{x_k}\rangle_{W_0^{1,\beta},W^{-1,\beta'}}~dt\right\vert}^{=:I_1}\\+
\underbrace{\left\vert\int_0^T \left(\langle  p_1,\mathbf{g}''(y_1)\eta_1^2\rangle_{W_0^{1,\beta'},W^{-1,\beta}}- \langle p_2,\mathbf{g}''(y_2)\eta_2^2 \rangle_{W_0^{1,\beta'},W^{-1,\beta}}\right)~dt\right\vert.}_{=:I_2} 	
	\end{multline*}

	The most crucial term in this estimate is the first term $I_1$. Using H\"older's inequality and then estimate~\eqref{eq:AppB-32d} it can be estimated by
	\begin{align*}
		I_1&\leq \sum_{k} 
		\|\eta_1+\eta_2\|_{L^{r}(I;C(\bar\Omega))}\|(\eta_1(x_k,\cdot)-\eta_2(x_k,\cdot))\otimes\delta_{x_k}\|_{L^{r'}(I;\mathcal{M}(\Omega))}\\&\leq c\|\eta_1+\eta_2\|_{L^{r}(I;C(\bar\Omega))}\|\eta_1-\eta_2\|_{L^{r'}(I;C(\bar\Omega))}\le c\|u_1-u_2\|_{L^2(\Omega)}\|h\|^2_{L^2(\Omega)},
	\end{align*}
	where the last inequality follows from arguments similar to Lemma~\ref{lem:A1} using the embedding $\mathbb{W}_0^r\hookrightarrow L^{r'}(I;C(\bar{\Omega}))$, as well as estimate~\eqref{eq:semi_est_lin3dondl} and the Lipschitz estimate~\eqref{eq:AppB-32d}.
	
	For the second term $I_2$, we proceed by standard arguments that, however, have to be combined carefully with our regularity analysis. Note that this term is similar to terms that appear typically for control in the right-hand side. In~\cite{neitzelvexler} a similar term, in a smoother setting, has been estimated successfully to avoid the presence of two norms. Similarly, such a term has been discussed in~\cite{krumbiegel2013second} to obtain second-order sufficient conditions for state-constrained semilinear problems.  We split $I_2$ into the sum of three terms, i.e.,
\begin{multline*}
		I_2=\left\vert\int_0^T\langle (p_1- p_2),\mathbf{g}''(y_1)\eta_1^2\rangle_{W_0^{1,\beta'},W^{-1,\beta}}~dt +\int_0^T \langle p_2,\mathbf{g}''(y_2)(\eta_1^2-\eta_2^2)\rangle_{W_0^{1,\beta'},W^{-1,\beta}}~dt\right .\\\left.+\int_0^T \langle p_2,(\mathbf{g}''(y_1)-\mathbf{g}''(y_2))\eta_1^2\rangle_{W_0^{1,\beta'},W^{-1,\beta}}~dt \right\vert.\end{multline*}
	Boundedness of $g''$ and the embedding $L^s(\Omega)\hookrightarrow W^{-1,\beta}(\Omega)$ results in
	\begin{align*}
		I_2&\leq c\|p_1-p_2\|_{L^{r'}(I;L^{s'}(\Omega))}\|\eta_1\|_{L^r(I;L^{\infty}(\Omega))}\|\eta_1\|_{L^\infty(I; L^s(\Omega))}\\
		&+c\|p_2\|_{L^{r'}(I;L^{s'}(\Omega))}\|\eta_1-\eta_2\|_{L^r(I;L^{\infty}(\Omega))}\|\eta_1+\eta_2\|_{L^\infty(I; L^s(\Omega))}\\
		&+\|p_2\|_{L^{r'}(I;L^{s'}(\Omega))}\|(\mathbf{g}''(y_1)-\mathbf{g}''(y_2))\eta_1^2\|_{L^r(I;L^{s}(\Omega))},
	\end{align*}
where the Lipschitz result for $p$ from Lemma~\ref{lem:est_adj}, the boundedness of $p_2$ in $L^{r'}(I; L^{s'}(\Omega))$ and, since $r<2$, the classical estimates~\eqref{eq:semi_est_classicalaux} can be applied to obtain
\begin{equation*}
	I_2\le c\left(\|u_1-u_2\|_{L^\beta(\Omega)}\|h\|_{L^2(\Omega)}^2+\|u_1-u_2\|_{L^2(\Omega)}\|h\|_{L^2(\Omega)}^2+\|(\mathbf{g}''(y_1)-\mathbf{g}''(y_2))\eta_1^2\|_{L^r(I;L^{s}(\Omega))}\right).
\end{equation*}
Hence, we only need to estimate the last term, which is	the important difference to e.g.~\cite{krumbiegel2013second,neitzelvexler}, 
since our states are not necessarily continuous in the whole space-time cylinder, but only with respect to the space variable. Thus, even though $g''$ is uniformly bounded by assumption, the Lipschitz results we can use are limited by the regularity of $y$. Note that Lemma~\ref{lem:002} guarantees a Lipschitz result for the states in $L^\infty(I;L^\beta(\Omega))$.
	Thus, we estimate
	\begin{multline*}
		\|(\mathbf{g}''(y_1)-\mathbf{g}''(y_2))\eta_1^2\|_{L^r(I; L^s(\Omega))}\le
		c\|(\mathbf{g}''(y_1)-\mathbf{g}''(y_2))\|_{L^\infty(I;L^\beta(\Omega))}\|\eta_1^2\|_{L^{r}(I;L^{2}(\Omega))}
		\\\le
		c\|y_1-y_2\|_{L^\infty(I;L^\beta(\Omega))}\|\eta_1\|_{L^{\infty}(I;L^{2}(\Omega))}\|\eta_1\|_{L^r(I; L^\infty(\Omega))}\le c\|u_1-u_2\|_{L^\beta(\Omega)}\|h\|_{L^2(\Omega)}^2,
	\end{multline*}
		by H\"older's inequality and Corollary~\ref{cor:1st-linearized}.
	Collecting all estimates concludes the proof.
\end{proof}
We end this paper with a result on second-order sufficient optimality conditions
\begin{theorem}\label{thm:ssc}
	Let Assumptions~\ref{assum:001-b},~\ref{assum-beta},~\ref {assum:002} as well as~\ref{optcontrol} hold, let $d=2$
	and $\bar{u}\in U_{\text{ad}}$ such that Theorem~\ref{teo:007} holds. If 
	there exists $\gamma>0$ such that
	\begin{equation}\label{eq:2nd-order_cond}
		f''(\bar u)h^2\geq\gamma\|h\|^2_{L^2(\Omega)},\qquad \forall h\in L^{\beta}(\Omega),
	\end{equation}
	then there exist $\varepsilon>0$ and $\sigma>0$ such that the following quadratic growth condition holds:
	\begin{equation*}
		f(u)\geq f(\bar{u}) + \sigma\|u-\bar{u}\|_{L^2(\Omega)}^2\qquad\forall u\in U_{\text{ad}} \text{ s.t. } \|u-\bar{u}\|_{L^\beta(\Omega)}\leq\varepsilon.
	\end{equation*}
\end{theorem}
\begin{proof}
	The proof is now quite standard, see e.g. the arguments in~\cite[Theorem 4.29]{Tro}. 
	Let $u\in U_{\text{ad}}$ be arbitrary. Then, by using typical Taylor expansion arguments, it holds that
	\begin{align*}
		&f(u)=f(\bar{u})+f'(\bar{u})(u-\bar{u})+\frac12f''(u_\tau)(u-\bar{u})^2\\
		&\ge f(\bar u)+\frac12f''(\bar u)(u-\bar{u})^2+\frac12(f''(u_\tau)-f''(\bar u))(u-\bar{u})^2\\
		&\ge f(\bar u)+\frac\gamma 2\|u-\bar u\|^2_{L^2}-\frac{{L}}2\|u_\tau-\bar u\|_{L^\beta}\|u-\bar u\|^2_{L^2}
	\end{align*}
	with $u_\tau=\bar{u}-\tau(u-\bar{u})$  with $0<\tau<1$, due to the mean value theorem and the Lipschitz results for $f''$ with Lipschitz constant $L$ and $h=u-\bar u$ from Lemma~\ref{lem:flip}. Note that we have applied the gradient equation~\eqref{eq:207} from Lemma~\ref{teo:202} as well as the coercivity assumption~\eqref{eq:2nd-order_cond}. Taking into account the definition of $u_\tau$, this implies
	\begin{align*}
		f(u)&\ge f(\bar u)+\frac{(\gamma -2\tau{L}\|u-\bar u\|_{L^\beta(\Omega)})}{2}\|u-\bar u\|^2_{L^2(\Omega)}.\end{align*}
	Choosing e.g. $\varepsilon<\frac{\gamma}{4\tau {L}}$ concludes the proof.
\end{proof}
\textbf{Acknowledgements} {The authors wish to thank Christian Meyer for valuable discussion concerning the adjoint equation in the non-autonomous case, and Hannes Meinlschmidt for pointing out reference~\cite{dondl2023}, that helped to prove the second order sufficient conditions.}

\bibliographystyle{plain}
\bibliography{referencias1}

\end{document}